\newtheorem{thm}{}[section]
\newtheorem{theorem}[thm]{Theorem}
\newtheorem{corollary}[thm]{Corollary}
\newtheorem{lemma}[thm]{Lemma}
\newtheorem{proposition}[thm]{Proposition}
\theoremstyle{definition}
\theoremstyle{remark}
\numberwithin{equation}{section}
\newcommand{\bphi}{\ensuremath{\bm{\phi}}}
\newcommand{\bpsi}{\ensuremath{\bm{\psi}}}
\newcommand{\df}{\ensuremath{\bm{\varphi}}}
\newcommand{\udf}{\ensuremath{\bm{\varphi_u}}}
\newcommand{\ldf}{\ensuremath{\bm{\varphi_l}}}
\newcommand{\pw}{\ensuremath{\bm{\Upsilon}}}
\newcommand{\ttt}{\ensuremath{\bm{t}}}
\newcommand{\sss}{\ensuremath{\bm{s}}}
\newcommand{\ww}{\ensuremath{\bm{w}}}
\newcommand{\uu}{\ensuremath{\bm{u}}}
\newcommand{\yy}{\ensuremath{\bm{y}}}
\newcommand{\xx}{\ensuremath{\bm{x}}}
\newcommand{\Ind}{\ensuremath{\mathbf{1}}}
\newcommand{\EE}{\ensuremath{\mathbb{E}}}
\newcommand{\BB}{\ensuremath{\mathcal{B}}}
\newcommand{\GG}{\ensuremath{\mathcal{G}}}
\newcommand{\YB}{\ensuremath{\mathcal{Y}}}
\newcommand{\XB}{\ensuremath{\mathcal{X}}}
\newcommand{\Ts}{\ensuremath{\mathcal{T}}}
\newcommand{\UU}{\ensuremath{\mathcal{R}}}
\newcommand{\SL}{\ensuremath{\mathscr{L}}}
\newcommand{\TT}{\ensuremath{\mathbb{T}}}
\newcommand{\NN}{\ensuremath{\mathbb{N}}}
\newcommand{\FF}{\ensuremath{\mathbb{F}}}
\newcommand{\XX}{\ensuremath{\mathbb{X}}}
\newcommand{\ZZ}{\ensuremath{\mathbb{Z}}}
\newcommand{\YY}{\ensuremath{\mathbb{Y}}}
\newcommand{\WW}{\ensuremath{\mathbb{W}}}
\newcommand{\VV}{\ensuremath{\mathbb{V}}}
\DeclareMathOperator{\spn}{span}
\DeclareMathOperator{\sgn}{sign}
\DeclareMathOperator*{\Ave}{Ave}
\let\abs=\envert
\let\norm=\enVert
\def\MR#1{}
\begin{document}

\title[Fundamental functions of almost greedy bases of  $L_p$]{Fundamental functions of almost greedy bases of  $\bm{L_p}$ for $\bm{1<p< \infty}$}

\author[J.~L. Ansorena]{Jos\'e L. Ansorena}
\address{Department of Mathematics and Computer Sciences\\
Universidad de La Rioja\\
Logro\~no\\
26004 Spain}
\email{joseluis.ansorena@unirioja.es}

\subjclass[2010]{46B15,46A35,41A65}

\keywords{quasi-greedy bases, almost greedy bases, democracy functions, Lebesgue $L_p$-spaces, subsymmetric bases}

\begin{abstract}
We prove that the fundamental function of any almost greedy basis of $L_p$, $1<p<\infty$, grows as either $(m^{1/p})_{m=1}^\infty$ or $(m^{1/2})_{m=1}^\infty$.
\end{abstract}

\thanks{The author acknowledges the support of the Spanish Ministry for Science, Innovation, and Universities under Grant PGC2018-095366-B-I00 for \emph{An\'alisis Vectorial, Multilineal y Aproximaci\'on}.}


\maketitle

\section{Introduction}
\noindent The study from a functional analytic point of view of greedy-like bases in Banach spaces sprang from the celebrated characterization of greedy bases as those bases that are simultaneously unconditional and democratic \cite{KoTe1999}. Since Konyagin and Temlyakov's foundational result, several types of bases of interest in approximation theory that can be characterized by combining derived forms of unconditionality and democracy have appeared in the literature.

Apart from greedy bases, the most relevant types of bases arisen from the study of the thresholding algorithm are almost greedy ones and quasi-greedy ones. Almost greediness is a weakened form of greediness and, in turn, quasi-greediness is a weakened form of unconditionality. In some sense the role played by quasi-greediness within the study of almost greedy bases runs parallel to the role played by unconditionality within the study of greedy ones. In fact, it is known that a basis is almost greedy if and only if it is quasi-greedy and democratic (see \cite{DKKT2003}*{Theorem 3.3} and \cite{AABW2021}*{Theorem 6.3}).

An intrinsic characteristic of democratic bases is its fundamental function. So, from a functional analytic point of view, it is very natural to ask in which way the geometry of the space affects the fundamental functions of almost greedy bases and greedy ones. As far as greedy bases are concerned, this topic connects with that of uniqueness of greedy bases in Banach spaces. Suppose that a Banach space $\XX$ has a greedy basis and that any greedy basis of $\XX$ is equivalent to a permutation of $\BB$. Then, obviously, all the greedy bases of $\XX$ have, up to equivalence, the same fundamental function. Within the class of Banach spaces with a unique (up to equivalence and permutation) greedy basis, we must differentiate two disjoint subclasses. On the hand, if $\XX$ has a unique (semi-normalized) unconditional basis $\BB$ which is democratic, then $\BB$ also is the unique greedy basis of $\XX$. One the other hand, there are Banach spaces with a unique greedy basis that also possess non-democratic semi-normalized unconditional bases. To the former subclass belong the spaces $\ell_1$, $\ell_2$, $c_0$, the Tsirelson space $\Ts$, and the $2$-convexified Tsirelson space $\Ts^{(2)}$ (see \cites{KotheToeplitz1934,LinPel1968, LinZip1969,BCLT1985,CasKal1998}). To the latter subclass belong certain Orlicz sequence spaces near either to $\ell_1$ or to $\ell_2$ and the separable part of weak $\ell_p$ for $1<p<\infty$ (see \cite{AADK2016}). If we broaden the scope of the study to nonlocally quasi-Banach spaces, the former subclass enlarges considerably. In fact, given $p\in(0,1)$, $\ell_p$, Hardy space $H_p(\TT)$, Lorentz sequence spaces $\ell_{p,q}$ for $0<q\le\infty$, and Orlicz sequence spaces $\ell_F$ for a concave Orlicz function $F$ belong to it (see \cites{Kalton1977b,KLW1990,Woj1997}).

There also exist Banach spaces without a unique greedy basis in which all greedy bases have the same fundamental function. For instance, the sequence space $\ell_p$ for $p\in(1,2)\cup(2,\infty)$ has a continuum of mutually permutatively non-equivalent greedy bases (see \cites{DHK2006,Smela2007}), and the fundamental function of all of them grows as
\[
\pw_p:=(m^{1/p})_{m=1}^\infty.
\]
More generally, it is known that given $0<p\le \infty$, the fundamental function of any super-democratic (that is, democratic and unconditional for constant coefficients) basis of $\ell_p$ (we replace $\ell_p$ with $c_0$ if $p=\infty$) is equivalent to $\pw_p$ (see \cite{AADK2019b}*{Proposition 4.21}). Therefore, the fundamental function of any almost greedy basis of $\ell_p$ is equivalent to $\pw_p$. We draw reader's attention that any super-democratic basis whose fundamental function is equivalent to $\pw_\infty$ is equivalent to the unit vector system of $c_0$. This observation gives that $c_0$ has a unique almost greedy basis. In contrast, $\ell_2$ and $\ell_p$ for $0<p\le 1$, despite having a unique greedy basis, have a continuum of mutually permutatively non-equivalent almost greedy bases (see \cite{DHK2006}*{Theorem 3.2} and \cite{AAW2021b}*{Corollary 6.2}). As far as quasi-greedy bases are concerned, the main structural difference between the spaces $\ell_p$ for $p\in (0,1]\cup\{2,\infty\}$ and the spaces $\ell_p$ for $p\in(1,2)\cup(2,\infty)$ is that, unlike the former, the latter spaces have non-democratic quasi-greedy bases (see \cites{Woj2000,DKK2003,DSBT2012,AAW2021}).

In $\ell_p$-spaces, the fundamental functions of greedy bases behave like those of almost greedy ones. However, a priori, the geometry of the space provides less information on the fundamental function of almost greedy bases than that it does on the fundamental function of greedy bases. An important example of this situation is the Lebesgue $L_p=L_p([0,1])$ for $1<p<\infty$. Since any unconditional basis of $L_p$ possesses a subbasis equivalent to the unit vector system of $\ell_p$ (see \cite{KadPel1962}), the fundamental function of any greedy basis of $L_p$ grows as $\pw_p$. In this sense, the behavior of greedy bases in $L_p$, $1<p<\infty$, runs parallel to that of greedy bases in $\ell_p$. This parallelism breaks down when dealing with almost greedy bases. To see this, we bring up Nielsen' paper \cite{Nielsen2007}, in which the author constructs a uniformly bounded orthogonal system of $L_2$ that is an almost greedy basis for $L_p$ for each $1<p<\infty$. Although not explicitly stated by the author, the proof of \cite{Nielsen2007}*{Theorem 1.4} gives that the fundamental function of the achieved basis of $L_p$ grows as $\pw_2$. This fact is not casual: by \cite{AACV2019}*{Proposition 2.5}, any quasi-greedy basis $\Psi$ of $L_p$ with $\sup_n \Vert \bpsi_n \Vert_\infty<\infty$ is democratic with fundamental function of the same order as $\pw_2$. Hence, $L_p$, $1<p<\infty$ and $p\not=2$, has almost greedy bases whose fundamental functions grow differently. As for $L_2$, we point out that any quasi-greedy basis of a Hilbert space is democratic with fundamental function equivalent to $\pw_2$ (see \cite{Woj2000}*{Proof of Theorem 3}). As the case $p=1$ is concerned, we bring up \cite{DSBT2012}*{Theorem 4.2}, which implies that all quasi-greedy bases of $L_1$ and $\ell_1$ are democratic with fundamental function of the same order as $\pw_1$. Moreover, by \cite{DHK2006}*{Theorem 3.2}, there is a continuum of mutually non-permutatively equivalent quasi-greedy bases of $L_1$. Thus, in some sense, almost greedy bases in $L_1$ and $\ell_1$ behave similarly. We also notice that, since $L_1$ has no unconditional basis \cite{LinPel1968}, it has no greedy basis either.

Once realized that, for $1<p<\infty$ and $p\not=2$, there are almost greedy bases of $L_p$ with essentially different fundamental functions, the question should be determine what functions are possible fundamental functions of an almost greedy basis of $L_p$. The Radamacher type and cotype of the space shed some information in this respect. In fact, the fundamental function, say $\df$, of any unconditional for constant coefficients basis of any Banach space of type $r$ and cotype $s$ satisfies
\begin{equation}\label{eq:RadEstimates}
\pw_s \lesssim \df\lesssim \pw_r
\end{equation}
(see, e.g., \cite{AlbiacAnsorena2016}*{Proof of Lemma 2.5}). In particular, any almost greedy basis of $L_p$ satisfies \eqref{eq:RadEstimates} with $r=\min\{2,p\}$ and $s=\max\{2,p\}$. As above explained, it is known that the ends of this range are possible fundamental functions. Oddly enough, as we shall prove, these extreme functions are, up to equivalence, all fundamental functions possible for almost greedy bases of $L_p$.

\begin{theorem}\label{thm:main} Let $1<p<\infty$. If $\df$ is the fundamental function of an almost greedy basis of $L_p$, then there is $r\in\{2,p\}$ such that $\df\approx \pw_r$.
\end{theorem}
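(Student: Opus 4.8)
The plan is to reduce the computation of the fundamental function $\df$ to an estimate for a square function, and then to resolve a dichotomy governed by the $L_2$-norms of the basis vectors. Throughout I denote the basis by $(\xx_n)$, semi-normalized so that $\Vert\xx_n\Vert_p\approx 1$.

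First I would record that, being almost greedy, the basis is quasi-greedy and democratic, hence super-democratic: combining unconditionality for constant coefficients (which quasi-greediness supplies) with democracy yields $\df(m)\approx\big\Vert\sum_{n\in A}\varepsilon_n\xx_n\big\Vert_p$ for every $A$ with $|A|=m$ and every choice of signs $(\varepsilon_n)$. Averaging over random signs and invoking Khintchine's inequality pointwise in $L_p$ then gives
\begin{equation*}
\df(m)\approx\Big\Vert\Big(\sum_{n\in A}|\xx_n|^2\Big)^{1/2}\Big\Vert_p=:G(A),
\end{equation*}
with constants independent of $A$; in particular $G(A)$ depends on $A$ only through $|A|$, so I write $G(m)$. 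Setting $F_A=\sum_{n\in A}|\xx_n|^2$, so that $G(m)^2=\Vert F_A\Vert_{p/2}$, estimate \eqref{eq:RadEstimates} tells me a priori that $G(m)$ grows between $m^{1/\max\{2,p\}}$ and $m^{1/\min\{2,p\}}$, and the whole problem becomes pinning down which endpoint it sits at.

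The dichotomy I would run is on the sequence $(\Vert\xx_n\Vert_2)$. Suppose first that it stays away from the extreme dictated by $p$: for $p>2$ that $\inf_n\Vert\xx_n\Vert_2>0$, and for $p<2$ that $\sup_n\Vert\xx_n\Vert_2<\infty$. Comparing $\Vert\cdot\Vert_{p/2}$ with $\Vert\cdot\Vert_1$ on the probability space gives, for $p>2$, the lower bound $G(m)^2\ge\Vert F_A\Vert_1=\sum_{n\in A}\Vert\xx_n\Vert_2^2\gtrsim m$, and for $p<2$ the upper bound $G(m)^2\le\Vert F_A\Vert_1\lesssim m$. In either case the matching inequality is furnished by \eqref{eq:RadEstimates}, so $G(m)\approx m^{1/2}$ and hence $\df\approx\pw_2$.

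In the complementary case there is a subsequence along which $\Vert\xx_n\Vert_2$ tends to the extreme value ($0$ if $p>2$, $\infty$ if $p<2$); such vectors are peaky, and a Kadec–Pe\l czy\'nski gliding-hump extraction produces a further subsequence $(\xx_{n_k})$ that is a small perturbation of a normalized disjointly supported sequence. For disjointly supported functions the square function is computed exactly, $G(\{n_1,\dots,n_m\})\approx m^{1/p}$, and, arranging the disjointification errors to be square-summable, this survives the perturbation uniformly in $m$. Transferring this one family of estimates to all $m$-element sets through democracy gives $\df\approx\pw_p$. The main obstacle I anticipate is precisely this last step: making the extraction deliver the growth $\approx m^{1/p}$ with constants \emph{uniform in $m$}—not merely equivalence of the full subsequence to the unit vector basis of $\ell_p$—and verifying that the square-summable error is negligible against $m^{1/p}$ simultaneously for every $m$. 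Everything else, namely the reduction to the square function and the tame case, is soft.
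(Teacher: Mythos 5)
Your reduction to the square function and your treatment of the case $p\ge 2$ are sound, and in fact for $p>2$ your dichotomy on $(\Vert\xx_n\Vert_2)$ is essentially the paper's own second proof of its Proposition on UCC basic sequences in $L_p$, $p\ge 2$: the tame case $\inf_n\Vert\xx_n\Vert_2>0$ is handled exactly as you do (the paper cites \cite{AACV2019}), and the spiky case $\inf_n\Vert\xx_n\Vert_2=0$ is resolved by a gliding-hump disjointification plus the principle of small perturbations. Your worry about uniformity in $m$ there is a non-issue: once the extracted subsequence is equivalent to the unit vector system of $\ell_p$, the two-sided estimate $\norm{\Ind_A}\approx m^{1/p}$ over subsets of that subsequence is automatic and democracy (together with the doubling property of $\udf$) transfers it to all sets.

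The genuine gap is the case $1<p<2$. There, the complementary case of your dichotomy fails: large (or infinite) $L_2$-norm with $\Vert\xx_n\Vert_p\approx 1$ does \emph{not} make the vectors peaky, and no Kadec--Pe{\l}czy{\'n}ski extraction yields an almost disjointly supported subsequence. Concretely, a normalized i.i.d.\ sequence of symmetric $r$-stable random variables, $p<r<2$, has $\Vert\xx_n\Vert_2=\infty$ for every $n$, is $p$-equi-integrable (its $L_p$-mass does not concentrate on small sets), and satisfies $\Vert\sum_{n\in A}\varepsilon_n\xx_n\Vert_p\approx \abs{A}^{1/r}$, so your square function $G(m)$ grows as $m^{1/r}$, strictly between the two endpoints. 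This is not a pathological worry: by Rosenthal's theorem, $\ell_r$ embeds in $L_p$ for every $r\in[p,2]$, so $L_p$ actually contains super-democratic (even symmetric, greedy) basic sequences with fundamental function $\pw_r$ for every intermediate $r$ --- and your argument, which never uses that the basis spans all of $L_p$ (type and cotype pass to basic sequences), would ``prove'' the theorem for such sequences. Hence the argument cannot be repaired locally; a new idea exploiting wholeness is required. The paper's route is duality: since $L_p$ has type $>1$, the fundamental function of a democratic truncation quasi-greedy basis has the URP, which forces the basis to be bidemocratic (Corollary~\ref{cor:23}); the dual basis $\XB^*$ is then a super-democratic basic sequence in $L_{p'}$ with $p'>2$, where the Kadec--Pe{\l}czy{\'n}ski dichotomy (i.e., your own $p>2$ argument) applies to give $\udf[\XB^*,\XX^*]\approx\pw_s$ with $s\in\{2,p'\}$, and the bidemocracy relation \eqref{eq:udfBidem}, namely $\udf[\XB^*,\XX^*](m)\approx m/\udf[\XB,\XX](m)$, transfers the conclusion back to $\df\approx\pw_r$ with $r\in\{2,p\}$. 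This duality step, absent from your proposal, is precisely what rules out the stable-variable counterexamples for bases of the whole space.
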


We close this introductory section by briefly describing the structure of the paper. Section~\ref{sect:main} revolves around the proof of Theorem~\ref{thm:main}. Previously, in Section~\ref{sect:preliminary}, we settle the terminology we will use, and we collect some auxiliary results that we will need.

\section{Background and terminology}\label{sect:preliminary}\noindent
Although we are mainly interested in Banach spaces, as the theory of greedy-like bases can be carry out for (not necessarily locally convex) quasi-Banach spaces (see \cite{AABW2021}), we will state the results we record in this section in this more general framework. All of them are essentially known. Nonetheless, for the reader’s ease, we will sketch some proofs. We use standard terminology on Functional Analysis and greedy-like bases as can be found in \cite{AlbiacKalton2016} and the aforementioned paper \cite{AABW2021}. For clarity, however, we record the notation that is used most heavily.

The symbol $\alpha_j\lesssim \beta_j$ for $j\in J$ means that there is a positive constant $C<\infty$ such that the families of non-negative real numbers $(\alpha_j)_{j\in J}$ and $(\beta_j)_{j\in J}$ are related by the inequality $\alpha_j\le C\beta_j$ for all $j\in J$. If $\alpha_j\lesssim \beta_j$ and $\beta_j\lesssim \alpha_j$ for $j\in J$ we say $(\alpha_j)_{j\in J}$ are $(\beta_j)_{j\in J}$ are equivalent, and we write $\alpha_j\approx \beta_j$ for $j\in J$.

Let $\XX$ be a quasi-Banach space over the real or complex scalar field $\FF$, and let $\XB=(\xx_n)_{n=1}^\infty$ be a linearly independent sequence that generates the whole space $\XX$. For a fixed sequence $\gamma=(\gamma_n)_{n=1}^\infty\in\FF^\NN$, let us consider the map
\[
S_\gamma=S_\gamma[\BB,\XX]\colon \spn( \xx_n \colon n\in\NN) \to \XX,
\quad \sum_{n=1}^\infty a_n\, \xx_n \mapsto \sum_{n=1}^\infty \gamma_n\, a_n \, \xx_n.
\]
The sequence $\XB$ is an \emph{unconditional basis} if and only if $S_\gamma$ is well-defined on $\XX$ for all $\gamma\in\ell_\infty$, and
\begin{equation}\label{eq:lu}
K_{u}=K_{u}[\BB,\XX]:=\sup_{\norm{\gamma}_\infty\le 1} \norm{S_\gamma} <\infty.
\end{equation}
If $K_u\le K<\infty$, we say that $\XB$ is $K$-unconditional.
Now, given $A\subseteq \NN$, we define the \emph{coordinate projection} onto $A$ (with respect to $\XB$) as
\[
S_A=S_A[\XB,\XX]=S_{\gamma_A}[\XB,\XX],
\]
where $\gamma_A=(\gamma_n)_{n=1}^\infty$ is the sequence defined by $\gamma_n=1$ if $n\in A$ and $\gamma_n=0$ otherwise. It is known (see, e.g., \cite{AABW2021}*{Theorem 1.10}) that $\XB$ is an unconditional basis if and only
\[
\sup \{ \norm{S_A} \colon A\subseteq\NN,\; \abs{A}<\infty\}<\infty.
\]
Set $[1,m]_\ZZ=\{n\in\ZZ \colon 1\le n\le m\}$ for $m\in\NN$. The sequence $\XB$ is a \emph{Schauder basis} if and only if it satisfies the weaker condition
\[
\sup_{m\in\NN} \norm{S_{[1,m]_\ZZ}}<\infty.
\]

A family $(f_j)_{j\in J}$ in $\XX$ is said to be \emph{semi-normalized} if $\norm{f_j}\approx 1$ for $j\in J$. For convenience, we will adopt a definition of basis that implies that only semi-normalized Schauder bases become bases. A \emph{basis} of $\XX$ will be norm-bounded sequence $\XB=(\xx_n)_{n=1}^\infty$ that generates whole space $\XX$, and for which there is a (unique) norm-bounded sequence $\XB^*:=(\xx_n^*)_{n=1}^\infty$ in the dual space $\XX^*$, called the \emph{dual basis} of $\XB$, such that $(\xx_n, \xx_n^*)_{n=1}^{\infty}$ is a biorthogonal system. A basic sequence will be a sequence in $\XX$ which is a basis of its closed linear span. Notice that, according to our terminology, any basic sequence is semi-normalized.

Let $\XB=(\xx_n)_{n=1}^\infty$ and $\YB=(\yy_n)_{n=1}^\infty$ be bases of quasi-Banach spaces $\XX$ and $\YY$, respectively. We say that $\XB$ \emph{$C$-dominates} $\YB$ if there is a linear map $T\colon \YY\to\XX$ such that $\norm{T}\le C$ and $T(\yy_n)=\xx_n$ for all $n\in\NN$. If the constant $C$ is irrelevant, we simply drop it from the notation. If $\XB$ dominates $\YB$ and, in turn, $\YB$ dominates $\XB$, we say that the bases are equivalent.

Given a basis $\XB=(\xx_n)_{n=1}^\infty$ of a quasi-Banach space $\XX$, the mapping
\begin{equation*}\label{eq:Fourier}
f\mapsto (a_n)_{n=1}^\infty:= (\xx_n^*(f))_{n=1}^{\infty}
\end{equation*}
is a bounded linear operator from $\XX$ into $c_0$, hence for each $m\in\NN$ there is a unique $A=A_m(f)\subseteq \NN$ of cardinality $\abs{A}=m$ such that whenever $n\in A$ and $j \in \NN\setminus A$, either $\abs{a_n}>\abs{a_j}$ or $\abs{a_n}=\abs{a_j}$ and $n<j$.
The \emph{$m$th greedy approximation} to $f\in\XX$ with respect to the basis $\XB$ is
\[
\GG_m(f)=\GG_m[\XB,\XX](f):=S_{A_m(f)}(f).
\]
The sequence of operators $(\GG_m)_{m=1}^{\infty}$ is called \emph{thresholding greedy algorithm} (TGA for short) on $\XX$ with respect to $\XB$. Other nonlinear operators of interest to us are the \emph{restricted truncation operators} $(\UU_{\, m})_{m=1}^{\infty}$. Let
\[
\EE=\{\varepsilon\in\FF \colon \abs{\varepsilon}=1\}.
\]
Given $A\subseteq\NN$ finite, $A\subseteq B$, $\varepsilon\in \EE^B$ we set
\[
\Ind_{\varepsilon, A}=\Ind_{\varepsilon, A}[\XB,\XX]=\sum_{n\in A} \varepsilon_n\, \xx_n.
\]
If $\varepsilon$ only takes the value $1$, we put $\Ind_{\varepsilon, A}=\Ind_A$. Given $f\in\XX$, define
\[
\varepsilon(f)= \left( \sgn (\xx_n^*(f)) \right)_{n=1}^\infty,
\]
where, as is customary, $\sgn(\cdot)$ denotes the sign function, i.e., $\sgn(0)=1$ and $\sgn(a)=a/\abs{a}$ if $a\in\FF\setminus\{0\}$. Given $m\in\NN$ we set
\[
\UU_{\, m}=\UU_{\, m}[\XB,\XX]\colon\XX\to \XX,\quad f\mapsto \left(\min_{n\in A_m(f)} \abs{\xx_n^*(f)} \right) \Ind_{\varepsilon(f), A_m(f)}.
\]

The basis $\XB$ is said to be is said to be \emph{almost greedy} if there is a constant $C$ such that
\[
\norm{f-\GG_m(f)}\le C \norm{f-S_A(f)}, \quad f\in \XX, \, m\in\NN,\, \abs{A}=m.
\]
We say that $\XB$ is \emph{greedy} if it satisfies the more demanding condition
\[
\norm{f-\GG_m(f)}\le C \norm{ f-\sum_{n\in A} a_n \, \xx_n }, \quad f\in \XX, \, m\in\NN,\, \abs{A}=m, \, a_n\in\FF.
\]

The basis $\XB$ is said to be \emph{quasi-greedy} if the TGA with respect to it is uniformly bounded. Equivalently, $\XB$ is quasi-greedy if and only if there is a constant $C\ge 1$ such that
\[
\norm{f-\GG_m(f)} \le C\norm{f}, \quad f\in\XX, \, m\in\NN.
\]

In turn, we say that $\XB$ is \emph{truncation quasi-greedy} if the restricted truncation operators are uniformly bounded, i.e., there is a constant $C$ such that
\[
\norm{\UU_m(f)} \le C\norm{f}, \quad f\in\XX, \, m\in\NN.
\]

Semi-normalized unconditional bases are a special kind of quasi-greedy bases, and although the converse is not true in general, quasi-greedy basis always retain in a certain sense a flavor of unconditionality. For example, any quasi-greedy basis is \emph{truncation quasi-greedy} (see \cite{AABW2021}*{Theorem 4.13}). In turn, if the basis $\XB=(\xx_n)_{n=1}^\infty$ is truncation quasi-greedy, then it is \emph{unconditional for constant coefficients} ( UCC for short), that is, there is a constant $C\ge 1$ such that whenever $A$, $B$ are finite subsets of $\NN$ with $A\subseteq B$ and $\varepsilon\in\EE^B$ we have $\norm{ \Ind_{\varepsilon,A}} \le C \norm{\Ind_{\varepsilon,B}}$. If the basis is UCC, then there is another constant $C\ge 1$ such that
\begin{equation}\label{eq:succ}
\norm{ \Ind_{\delta,A} } \le C \norm{ \Ind_{\varepsilon,A} }
\end{equation}
for all finite subsets $A\subseteq\NN$ and all choice of signs $\delta$ and $\varepsilon$ (see \cite{AABW2021}*{Lemma 2.2}).

A basis (or basic sequence) $\XB=(\xx_n)_{n=1}^\infty$ of a quasi-Banach space $\XX$ is said to be \emph{democratic} if there is a constant $D\ge 1$ such that
\[
\norm{\Ind_A}\le D \norm{\Ind_B}
\]
for any two finite subsets $A$, $B$ of $\NN$ with $\abs{A}\le \abs{B}$. The lack of democracy of a basis $\BB$ exhibits some sort of asymmetry. To measure how much a basis $\BB$ deviates from being democratic, we consider its \emph{upper democracy function}, also known as its \emph{fundamental function},
\[
\udf(m): = \udf[\BB, \XX](m)=\sup_{\abs{A}\le m}\norm{ \Ind_{A}} ,\quad m\in\NN,
\]
and its \emph{lower democracy function},
\[
\ldf(m):= \ldf[\BB, \XX](m)=\inf_{\abs{A}\ge m}\norm{ \Ind_{A}}, \quad m\in\NN.
\]

Suppose that $\XB$ is UCC. Then $\ldf(m)\lesssim\udf(m)$ for $m\in\NN$, hence $\XB$ is democratic if and only $\udf(m)\lesssim\ldf(m)$ for $m\in\NN$, in which case $\XB$ is \emph{super-democratic}, i.e., there is a constant $D\ge 1$ such that
\begin{equation*}
\norm{ \Ind_{\varepsilon,A}} \le C \norm{\Ind_{\varepsilon,B} }
\end{equation*}
for any two finite subsets $A$, $B$ of $\NN$ with $\abs{A}\le\abs{B}$, and any signs $\varepsilon$ and $\delta$.

Following \cite{DKKT2003}, we say that a sequence $(s_m)_{m=1}^\infty$ in $(0,\infty)$ has the upper regularity property (URP for short) if there is an integer $k>2$ such that
\[
s_{km}\le\frac{1}{2} k s_m, \quad m\in\NN.
\]
The following lemma is proved more or less explicitly in \cite{DKKT2003}.
\begin{lemma}\label{lem:URP}
Let $\sss=(s_m)_{m=1}^\infty$ be an essentially increasing sequence in $(0,\infty)$. Then, $\sss$ has the URP if and only if there is a constant $C$ such that the weight $(1/s_m)_{m=1}^\infty$ satisfies the Dini condition
\begin{equation}\label{eq:Dini}
\sum_{n=1}^m \frac{1}{s_n}\le C \frac{m}{s_m}, \quad m\in\NN.
\end{equation}
\end{lemma}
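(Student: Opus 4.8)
The plan is to prove the two implications separately, viewing the Dini condition \eqref{eq:Dini} as a statement about the partial sums $g(m):=\sum_{n=1}^m 1/s_n$. Since $\sss$ is essentially increasing, the weight $(1/s_n)_{n=1}^\infty$ is essentially decreasing, so every term $1/s_n$ with $n\le m$ is, up to a multiplicative constant, at least $1/s_m$; this yields the free lower bound $g(m)\gtrsim m/s_m$. Consequently \eqref{eq:Dini} is \emph{equivalent} to the comparison $g(m)\approx m/s_m$, and I would use this reformulation throughout.

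For the implication URP $\Rightarrow$ Dini, I would decompose the range $[1,m]_\ZZ$ into the geometric blocks $(k^{i-1},k^i]$ determined by the integer $k$ from the URP. On each block the essentially decreasing weight is comparable to its value at the left endpoint, so the $i$-th block contributes at most a constant multiple of $k^{i}/s_{k^{i-1}}$. The URP, rewritten as $1/s_{k^{i-1}}\le \tfrac{k}{2}\,(1/s_{k^i})$, lets me push every endpoint value down to the top scale $1/s_{k^j}$ at the cost of a factor $(k/2)^{\,j-i+1}$; the resulting sum over $i$ is a geometric series of ratio $1/2$, which telescopes to $\lesssim k^{j}/s_{k^{j}}$. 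Passing from powers of $k$ to a general $m$ by choosing $j$ with $k^{j}\le m<k^{j+1}$ and invoking essential monotonicity once more then gives \eqref{eq:Dini}.

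The converse, Dini $\Rightarrow$ URP, is where the real work lies, because the URP demands the sharp constant $\tfrac12 k$ rather than merely $s_{km}\lesssim k\,s_m$. The naive estimate—bounding $g(km)\ge g(m)\gtrsim m/s_m$ from below and $g(km)\lesssim km/s_{km}$ from above via \eqref{eq:Dini}—only produces $s_{km}\le c\,C\,k\,s_m$, and the constant $cC$ need not beat $\tfrac12$. To gain the missing factor I would convert \eqref{eq:Dini} into a reverse-doubling estimate for $g$. Writing $1/s_m=g(m)-g(m-1)$ and reading \eqref{eq:Dini} as $g(m)\le C\,m/s_m$ gives the increment inequality $g(m)-g(m-1)\ge g(m)/(Cm)$, that is $g(m-1)\le g(m)\,(1-1/(Cm))$. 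Since testing \eqref{eq:Dini} at $m=1$ forces the Dini constant to satisfy $C\ge 1$, the factors $1-1/(Cn)$ lie in $[\tfrac12,1)$ for all $n\ge 2$; telescoping from $m+1$ to $km$ and using $(1-x)^{-1}\ge e^{x}$ then yields the genuine power gain $g(km)\ge (k/2)^{1/C}\,g(m)$ for every $m$.

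Combining this with the comparison $g\approx m/s_m$ gives $s_{km}/s_m\lesssim k^{\,1-1/C}$, which is $\le \tfrac12 k$ as soon as the integer $k>2$ is chosen large enough (depending only on $c$ and $C$), establishing the URP. The one point needing care is that a \emph{single} $k$ must work for all $m\in\NN$ simultaneously; the argument above is uniform in $m$ precisely because $C\ge 1$ keeps the telescoped factors bounded away from zero for every $n\ge 2$, so no separate treatment of small indices is required. The same bookkeeping is carried out, more or less explicitly, in \cite{DKKT2003}.
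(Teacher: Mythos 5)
Your proof is correct. There is nothing in the paper to compare it against: the author does not prove Lemma~\ref{lem:URP}, but only remarks that it is ``proved more or less explicitly'' in \cite{DKKT2003}, so your argument fills in exactly what the paper leaves implicit. Both directions check out. The forward direction is the routine one: blocks $(k^{i-1},k^i]$, essential monotonicity within each block, and the iterated URP bound $1/s_{k^{i-1}}\le (k/2)^{\,j-i+1}/s_{k^j}$ summing to a geometric series; the passage from $k^j$ to general $m$ costs only a factor depending on the fixed $k$. You also correctly identified the converse as the delicate direction: the naive comparison of $g(km)$ with $g(m)$ via the Dini condition only gives $s_{km}\lesssim k\,s_m$, which does not meet the sharp threshold $\tfrac12 k$ in the definition of the URP. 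Your multiplicative telescoping $g(m-1)\le g(m)\bigl(1-\tfrac{1}{Cm}\bigr)$, combined with $(1-x)^{-1}\ge e^x$ and the harmonic-sum estimate over $(m,km]$, yields the genuine power gain $g(km)\ge (k/2)^{1/C}g(m)$ uniformly in $m$; together with the two-sided comparison $g(m)\approx m/s_m$ (where the lower bound uses only essential monotonicity) this gives $s_{km}\le c\,C\,2^{1/C}k^{1-1/C}s_m$, and since $k^{1/C}\to\infty$ a single integer $k>2$ works for all $m$ at once. The two checkpoints that could have sunk the argument --- that $C\ge 1$ (forced by testing the Dini condition at $m=1$) keeps the telescoped factors in $[\tfrac12,1)$ for $n\ge 2$, and that the essential-monotonicity constant $c$ is absorbed in the final choice of $k$ --- are both handled explicitly, so the proof is complete as written.
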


A \emph{weight} will be a sequence $\ww=(w_n)_{n=1}^\infty$ of non-negative scalars with $w_1>0$. The \emph{primitive sequence} $(s_m)_{m=1}^\infty$ of the weight $\ww$ is defined by $s_m=\sum_{n=1}^m w_n$ for all $m\in\NN$. Given $0<q\le\infty$, we will denote by $d_{1,q}(\ww)$ the Lorentz space consisting of all sequences $f\in c_0$ whose non-increasing rearrangement of $(b_n)_{n=1}^\infty$ satisfies
\[
\norm{f}_{d_{1,q}(\ww)}=\norm{( s_n\, a_n)_{n=1}^\infty}_{\ell_q(\uu)}<\infty,
\]
where $\uu=(w_n/s_n)_{n=1}^\infty$. If $\ww=n^{1/p-1}$, then $d_{1,q}(\ww)=\ell_{p,q}$ up to an equivalent quasi-norm.

Given a sequence $\ttt=(t_m)_{m=1}^\infty$ in $\NN$, the Marcinkiewicz space $m(\ttt)$ consists of all $f=(a_k)_{k=1}^\infty\in\FF^\NN$ such that
\[
\sup_{\abs{A}\le m} \frac{1}{t_m} \sum_{k\in A} \abs{a_k}<\infty.
\]
\begin{lemma}\label{lem:wLMar}
Let $\ww$ be a weight whose primitive sequence $(s_m)_{m=1}^\infty$
has the URP. Set $\ttt=(m/s_m)_{m=1}^\infty$. Then, there is a constant $C$ such that
\[
\norm{f}_{m(\ttt)}\le C
\norm{f}_{d_{1,\infty}(\ww)}, \quad f\in c_0.
\]
\end{lemma}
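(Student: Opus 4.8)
The plan is to rewrite both quasi-norms in terms of the non-increasing rearrangement $(a_k^*)_{k=1}^\infty$ of $f=(a_k)_{k=1}^\infty$ and then reduce the estimate to the Dini condition furnished by Lemma~\ref{lem:URP}. First I would observe that, for each fixed $m\in\NN$, the supremum $\sup_{\abs{A}\le m}\sum_{k\in A}\abs{a_k}$ is attained by choosing $A$ to be the set of indices of the $m$ largest entries of $(\abs{a_k})_{k=1}^\infty$, so that it equals $\sum_{k=1}^m a_k^*$. Since $\ttt=(m/s_m)_{m=1}^\infty$, this gives
\[
\norm{f}_{m(\ttt)}=\sup_{m\in\NN}\frac{s_m}{m}\sum_{k=1}^m a_k^*.
\]
On the other side, unwinding the definition of $d_{1,\infty}(\ww)$ with $q=\infty$ yields $\norm{f}_{d_{1,\infty}(\ww)}=\sup_{n\in\NN} s_n\, a_n^*$; in particular one has the pointwise majorization $a_k^*\le s_k^{-1}\norm{f}_{d_{1,\infty}(\ww)}$ for every $k$.

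Next I would sum this majorization over $k$. For any fixed $m$,
\[
\sum_{k=1}^m a_k^*\le \norm{f}_{d_{1,\infty}(\ww)}\sum_{k=1}^m\frac{1}{s_k}.
\]
The primitive sequence $\sss=(s_m)_{m=1}^\infty$ is non-decreasing and satisfies $s_m\ge s_1=w_1>0$, hence it is essentially increasing, so Lemma~\ref{lem:URP} applies: the URP hypothesis on $\sss$ is equivalent to the Dini bound $\sum_{k=1}^m s_k^{-1}\le C\,m/s_m$ for some constant $C$ and all $m\in\NN$. Substituting this in, I obtain $\sum_{k=1}^m a_k^*\le C\,(m/s_m)\norm{f}_{d_{1,\infty}(\ww)}$, that is,
\[
\frac{s_m}{m}\sum_{k=1}^m a_k^*\le C\,\norm{f}_{d_{1,\infty}(\ww)}.
\]
Taking the supremum over $m\in\NN$ gives $\norm{f}_{m(\ttt)}\le C\,\norm{f}_{d_{1,\infty}(\ww)}$, as desired; in particular $f\in m(\ttt)$ whenever $f\in d_{1,\infty}(\ww)$.

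The argument is a direct chain of estimates, so there is no deep obstacle. The only substantive point is the translation of the URP hypothesis into the usable Dini inequality, which is exactly the content of Lemma~\ref{lem:URP}, together with the (immediate) verification that its monotonicity hypothesis holds for $\sss$. The remaining bookkeeping—identifying the extremal set $A$ in the Marcinkiewicz quasi-norm and reading off the weak-Lorentz quasi-norm as $\sup_n s_n a_n^*$—is routine.
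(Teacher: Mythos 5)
Your proof is correct and follows essentially the same route as the paper's: bound the sum over any set $A$ with $\abs{A}\le m$ by the sum of the $m$ largest terms of the rearrangement, use the pointwise bound $a_k^*\le s_k^{-1}\norm{f}_{d_{1,\infty}(\ww)}$, and conclude via the Dini condition from Lemma~\ref{lem:URP}. If anything, your version is slightly more careful than the paper's, which implicitly normalizes $f$ to the unit ball of $d_{1,\infty}(\ww)$ while you carry the norm factor explicitly and also verify the essentially-increasing hypothesis of Lemma~\ref{lem:URP}.
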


\begin{proof}
Use Lemma~\ref{lem:URP} to pick a constant $C$ such that \eqref{eq:Dini} holds. Fix $f=(a_k)_{k=1}^\infty\in c_0$ and $m\in\NN$. Let $(b_n)_{n=1}^\infty$ denote the non-increasing rearrangement of $f$. For any$A\subseteq\NN$ with $\abs{A}\le m$ we have
\[
\sum_{k\in A} \abs{a_k}\le \sum_{n=1}^m b_n\le \sum_{n=1}^m \frac{1}{s_n} \le C \frac{m}{s_m}.\qedhere
\]
\end{proof}

We say that a basis $\XB=(\xx_n)_{n=1}^\infty$ of a quasi-Banach space $\XX$ is \emph{bidemocratic} if there is a constant $C$ such that
\[
\udf[\XB,\XX](m) \udf[\XB^*,\XX^*](m)\le C m, \quad m\in\NN.
\]
The identity $\Ind_A[\XB^*,\XX^*](\Ind_A[\XB,\XX])=\abs{A}$ yields that if $\BB$ is bidemocratic, then both $\BB$ and $\BB^*$ are democratic, and
\begin{equation}\label{eq:udfBidem}
\udf[\XB^*,\XX^*](m)\approx\frac{m}{\udf[\XB,\XX](m) }.
\end{equation}
There is also a connection between bidemocratic bases and truncation quasi-greedy ones.

\begin{proposition}[\cite{AABW2021}*{Proposition 5.7}]\label{prop:21}
Let $\XB$ be a bidemocratic basis of a quasi-Banach space. Then $\XB$ and $\XB^*$ are truncation quasi-greedy, hence UCC and super-democratic.
\end{proposition}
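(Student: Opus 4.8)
The plan is to reduce everything to the single statement that $\XB$ is truncation quasi-greedy. Once we know that both $\XB$ and $\XB^*$ are truncation quasi-greedy, the remaining conclusions come for free from the facts recorded above in this section: a truncation quasi-greedy basis is UCC; bidemocracy forces both $\XB$ and $\XB^*$ to be democratic, via the identity $\Ind_A[\XB^*,\XX^*](\Ind_A[\XB,\XX])=\abs{A}$ and \eqref{eq:udfBidem}; and a basis that is simultaneously UCC and democratic is super-democratic. So the whole content lies in bounding the restricted truncation operators.

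The heart of the proof will be a \emph{signed} strengthening of the hypothesis. I would first establish that for every finite $A\subseteq\NN$ and every $\varepsilon\in\EE^A$,
\begin{equation*}
\norm{\Ind_{\varepsilon,A}[\XB,\XX]}\;\norm{\Ind_{\overline{\varepsilon},A}[\XB^*,\XX^*]}\lesssim \abs{A},\tag{$\star$}
\end{equation*}
where $\overline{\varepsilon}$ is the vector of conjugate signs. The point is that this signed estimate reduces to the unsigned hypothesis through the elementary bound $\sup_{\varepsilon\in\EE^A}\norm{\Ind_{\varepsilon,A}}\lesssim\udf(\abs{A})$, valid for \emph{any} basis: splitting $A$ according to the sign pattern and using the quasi-triangle inequality controls real signs by $\udf$; a signed binary expansion of coefficients in $[-1,1]$ together with the $p$-subadditivity of the quasi-norm handles real coefficients of modulus at most one; and a real/imaginary decomposition then covers complex signs. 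Applying this to $\XB$ in $\XX$ and to $\XB^*$ in the Banach space $\XX^*$ and multiplying, the product is $\lesssim\udf[\XB,\XX](\abs{A})\,\udf[\XB^*,\XX^*](\abs{A})\lesssim\abs{A}$ by the bidemocracy hypothesis.

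With $(\star)$ in hand, I would estimate $\UU_m$ by duality. Fix $f\in\XX$ and $m\in\NN$, write $A=A_m(f)$, $\varepsilon=\varepsilon(f)$, and $t=\min_{n\in A}\abs{\xx_n^*(f)}$, so that $\UU_m(f)=t\,\Ind_{\varepsilon,A}[\XB,\XX]$. Testing $f$ against the functional $\Ind_{\overline{\varepsilon},A}[\XB^*,\XX^*]$ gives $\sum_{n\in A}\abs{\xx_n^*(f)}\ge mt$, whence
\[
\norm{f}\ge\frac{mt}{\norm{\Ind_{\overline{\varepsilon},A}[\XB^*,\XX^*]}}.
\]
Dividing $\norm{\UU_m(f)}=t\,\norm{\Ind_{\varepsilon,A}[\XB,\XX]}$ by this inequality and invoking $(\star)$ yields $\norm{\UU_m(f)}\lesssim\norm{f}$, i.e.\ $\XB$ is truncation quasi-greedy. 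The estimate for $\XB^*$ is symmetric: the role of the testing functional is played by the image of $\Ind_{\overline{\varepsilon},A}[\XB,\XX]$ under the canonical map $\XX\to\XX^{**}$, and only the inequality $\norm{Jx}\le\norm{x}$ is needed there, so no reflexivity or Hahn--Banach extension is required.

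The main obstacle I anticipate is precisely the passage from the unsigned hypothesis to the signed estimate $(\star)$ without circularity: one is tempted to invoke super-democracy to compare signed and unsigned sums, but that is part of the conclusion. The resolution is that only the \emph{easy} direction $\sup_{\varepsilon}\norm{\Ind_{\varepsilon,A}}\lesssim\udf(\abs{A})$ is needed, and this direction is automatic from the modulus of concavity of the quasi-norm; the genuinely nontrivial inequality $\udf(\abs{A})\lesssim\norm{\Ind_{\varepsilon,A}}$, which does require UCC, is never used. Care is only needed to track the constants through the $p$-subadditivity in the complex case, but no new idea is involved there.
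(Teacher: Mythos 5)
Your proof is correct, and it is essentially the standard argument: the paper itself offers no proof of this proposition (it is quoted from \cite{AABW2021}*{Proposition 5.7}), and the proof given in that reference is exactly your duality scheme --- self-improve bidemocracy to the signed estimate $\sup_{\varepsilon\in\EE^A}\norm{\Ind_{\varepsilon,A}[\XB,\XX]}\,\sup_{\varepsilon\in\EE^A}\norm{\Ind_{\varepsilon,A}[\XB^*,\XX^*]}\lesssim\abs{A}$ via the elementary bound $\sup_{\varepsilon}\norm{\Ind_{\varepsilon,A}}\lesssim\udf(\abs{A})$ (which, as you correctly note, needs only the quasi-triangle inequality and $p$-subadditivity, not UCC, so there is no circularity), then test $f$ against $\Ind_{\overline{\varepsilon},A}[\XB^*,\XX^*]$ to get $mt\le\norm{\Ind_{\overline{\varepsilon},A}[\XB^*,\XX^*]}\norm{f}$, and symmetrically for $\XB^*$ using only $\abs{f^*(\Ind_{\overline{\varepsilon},A})}\le\norm{\Ind_{\overline{\varepsilon},A}}\norm{f^*}$. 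Your handling of the remaining implications (truncation quasi-greedy $\Rightarrow$ UCC, bidemocratic $\Rightarrow$ democratic via $\Ind_A[\XB^*,\XX^*](\Ind_A[\XB,\XX])=\abs{A}$ and \eqref{eq:udfBidem}, and UCC plus democratic $\Rightarrow$ super-democratic) matches the facts recorded in Section~\ref{sect:preliminary}, so nothing is missing.
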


Next, we bring up a partial converse of Proposition~\ref{prop:21}.
\begin{proposition}[cf.\@ \cite{DKKT2003}*{Proposition 4.4}]\label{prop:17} Let $\XB=(\xx_n)_{n=1}^\infty$ be a democratic truncation quasi greedy basis of a quasi-Banach space $\XX$. Suppose that the fundamental function of $\XB$ has the URP. Then, $\XB$ is bidemocratic.
\end{proposition}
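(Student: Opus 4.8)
The plan is to verify the defining inequality of bidemocracy, namely $\udf[\XB,\XX](m)\,\udf[\XB^*,\XX^*](m)\lesssim m$, by controlling the fundamental function of the dual basis. Write $\df:=\udf[\XB,\XX]$. The biorthogonality relation $\Ind_A[\XB^*,\XX^*](\Ind_A[\XB,\XX])=\abs{A}$ forces the reverse inequality $\udf[\XB^*,\XX^*](m)\gtrsim m/\df(m)$ automatically, so the entire content of the statement is the upper estimate $\udf[\XB^*,\XX^*](m)\lesssim m/\df(m)$ for $m\in\NN$. Since $\norm{\Ind_A[\XB^*,\XX^*]}=\sup_{\norm{f}\le 1}\abs{\sum_{n\in A}\xx_n^*(f)}\le \sup_{\norm{f}\le 1}\sum_{n\in A}\abs{\xx_n^*(f)}$, it suffices to prove that for every $f=\sum_n a_n\,\xx_n\in\XX$ and every $A\subseteq\NN$ with $\abs{A}\le m$ one has $\sum_{n\in A}\abs{a_n}\lesssim (m/\df(m))\norm{f}$, where $a_n=\xx_n^*(f)$.

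To produce this, I would first record the structural consequences of the hypotheses: being truncation quasi-greedy, $\XB$ is UCC, and together with democracy this makes it super-democratic, so that $\df\approx\ldf[\XB,\XX]$ is (essentially) increasing and $\norm{\Ind_{\varepsilon,A}}\approx\df(\abs{A})$ for every finite $A$ and every choice of signs $\varepsilon$. Letting $(b_n)_{n=1}^\infty$ be the non-increasing rearrangement of $(\abs{a_n})_{n=1}^\infty$, evaluation of the restricted truncation operator at $f$ gives $\norm{\UU_m(f)}=b_m\,\norm{\Ind_{\varepsilon(f),A_m(f)}}\approx b_m\,\df(m)$, so the uniform boundedness of $(\UU_m)_{m=1}^\infty$ yields the weak-type coefficient estimate
\[
\df(m)\,b_m\lesssim \norm{f}, \quad m\in\NN.
\]
Setting $s_m=\df(m)$ and letting $\ww$ be the weight whose primitive sequence is $(s_m)_{m=1}^\infty$, this is exactly the statement that $\norm{f}_{d_{1,\infty}(\ww)}\lesssim\norm{f}$.

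Finally I would bring in the hypothesis that $\df$ has the URP. By Lemma~\ref{lem:URP} the weight $(1/s_m)_{m=1}^\infty$ satisfies the Dini condition, and Lemma~\ref{lem:wLMar}, applied with $\ttt=(m/s_m)_{m=1}^\infty=(m/\df(m))_{m=1}^\infty$, upgrades the weak estimate to the Marcinkiewicz (strong-type) bound $\norm{f}_{m(\ttt)}\lesssim\norm{f}_{d_{1,\infty}(\ww)}\lesssim\norm{f}$. Unwinding the definition of $m(\ttt)$, this means $\sum_{n=1}^m b_n\lesssim (m/\df(m))\norm{f}$, whence $\sum_{n\in A}\abs{a_n}\le\sum_{n=1}^{m}b_n\lesssim(m/\df(m))\norm{f}$ for every $A$ with $\abs{A}\le m$, which is exactly what was needed. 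The one genuinely delicate step is the second: extracting the sharp weak-type control $\df(m)\,b_m\lesssim\norm{f}$ from truncation quasi-greediness, where the precise interplay between the restricted truncation operators and \emph{super}-democracy (rather than mere democracy, to replace $\norm{\Ind_{\varepsilon(f),A_m(f)}}$ by $\df(m)$) must be handled; once that weak bound is secured, the URP does all the remaining work through the two lemmas.
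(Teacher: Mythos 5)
Your proposal is correct and takes essentially the same route as the paper: the paper's proof also consists of the weak-type coefficient estimate (which it imports as \cite{AABW2021}*{Theorem 8.12} combined with democracy, and which you instead derive directly from the restricted truncation operators together with super-democracy), upgraded via Lemma~\ref{lem:wLMar} to the Marcinkiewicz bound $\sum_{n\in A}\abs{\xx_n^*(f)}\le C(m/\df(m))\norm{f}$, which is then read off as the required estimate $\udf[\XB^*,\XX^*](m)\lesssim m/\df(m)$. The only difference is that you prove the cited weak-type embedding inline rather than quoting it, which is a faithful unpacking, not a different argument.
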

\begin{proof} Set $\ttt=(m/s_m)_{m=1}^\infty$, where $s_m=\udf[\XB,\XX](m)$. Combining
\cite{AABW2021}*{Theorem 8.12}
with Lemma~\ref{lem:wLMar}, and taking into account the democracy of $\XB$, gives
that the unit vector system of $m(\ttt)$ $C$-dominates $\XB$ for some constant $C$.
Pick $m\in\NN$, $f\in B_\XX$ and $A\subseteq\NN$ with $\abs{A}\le m$. We have
\begin{align*}
\abs{\left( \Ind_A[\XB^*,\XX^*]\right)(f)}
\le\sum_{n\in A} \abs{\xx_n^*(f)}
\le C \frac{m}{s_m}.
\end{align*}
Taking the supremum on $f$ and $A$ yields the desired inequality.
\end{proof}

We say that a quasi-Banach space $\XX$ has Rademacher type (respectively cotype) $r$, $0<r<\infty$, if there is a constant $C$ such that for any finite family $(f_j)_{j\in A}$ in $\XX$, being
\[
A:=\Ave_{\varepsilon_j=\pm 1} \norm{\sum_{j\in A} \varepsilon_j \, f_j }\; \text{ and }
S:=\left(\sum_{j\in A} \norm{f_j}^r\right)^{1/r},
\]
we have $A\le C S$ (resp., $S\le C A$). Since the optimal type (resp., cotype) of the scalar field is $2$, if a nonzero quasi-Banach space $\XX$ has type (resp., cotype) $r$, then $r\le 2$ (resp., $r\ge 2$). Given $0<p<\infty$ and a measure space $(\Omega,\Sigma,\mu)$ such that the dimension of the vector space consisting of al integrable simple functions is infinite, the optimal type of $L_p(\mu)$ is $\min\{2,p\}$, and its optimal cotype is $\max\{2,p\}$.

Since any quasi-Banach space with a Rademacher type larger than one is locally convex \cite{Kalton1980}*{Theorem 4.1}, the following result lies within the theory of Banach spaces.
\begin{proposition}[cf.\@ \cite{DKKT2003}*{Proposition 4.1}]\label{prop:typeURP}
Let $\XB$ be a UCC basis of a Banach space $\XX$. Assume that $\XX$ has type $r>1$. Then $\df:=\udf[\XB,\XX]$ has the URP.
\end{proposition}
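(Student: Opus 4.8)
The plan is to control $\df$ along geometric progressions, which is exactly what the URP asks for. By the definition of the URP, it suffices to find an integer $k>2$ such that $\df(km)\le\tfrac12 k\,\df(m)$ for every $m\in\NN$. So I would fix $m\in\NN$, an integer $k$ to be specified at the end, and an arbitrary $A\subseteq\NN$ with $\abs{A}\le km$, and aim to bound $\norm{\Ind_A}$. Since $\abs{A}\le km$, I can partition $A$ into $k$ pairwise disjoint (possibly empty) blocks $A=A_1\cup\dots\cup A_k$ with $\abs{A_i}\le m$ for every $i$, simply by cutting an enumeration of $A$ into consecutive pieces of length at most $m$. Writing $f_i=\Ind_{A_i}$, we have $\Ind_A=\sum_{i=1}^k f_i$ and $\norm{f_i}=\norm{\Ind_{A_i}}\le\df(m)$.

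The core of the argument is to combine unconditionality for constant coefficients with the type inequality. For any choice of signs $\varepsilon=(\varepsilon_i)_{i=1}^k\in\{\pm1\}^k$, the vector $\sum_{i=1}^k\varepsilon_i f_i$ coincides with $\Ind_{\delta,A}$, where $\delta\in\EE^A$ takes the constant value $\varepsilon_i$ on each block $A_i$. Hence, by the sign-independence \eqref{eq:succ} granted by UCC, there is a constant $C_1=C_1[\XB,\XX]$ with
\[
\norm{\Ind_A}\le C_1\norm{\sum_{i=1}^k\varepsilon_i f_i}
\]
for every such $\varepsilon$. Averaging this inequality over $\varepsilon$ and applying that $\XX$ has type $r$, with constant $C_2$, I would get
\[
\norm{\Ind_A}\le C_1\Ave_{\varepsilon_i=\pm1}\norm{\sum_{i=1}^k\varepsilon_i f_i}\le C_1 C_2\left(\sum_{i=1}^k\norm{f_i}^r\right)^{1/r}\le C_1 C_2\,k^{1/r}\,\df(m).
\]
Taking the supremum over all $A$ with $\abs{A}\le km$ yields $\df(km)\le C_1 C_2\,k^{1/r}\,\df(m)$ for every $m\in\NN$.

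To finish, I would exploit that $r>1$. Since then $1/r<1$, we have $k^{1/r}/k=k^{1/r-1}\to 0$ as $k\to\infty$, so there is an integer $k>2$ for which $C_1 C_2\,k^{1/r}\le\tfrac12 k$. For this particular $k$, the estimate of the previous paragraph becomes $\df(km)\le\tfrac12 k\,\df(m)$ for all $m$, which is precisely the URP.

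The one delicate step is the passage from the averaged quantity to $\norm{\Ind_A}$ itself. The type hypothesis only bounds the expected norm $\Ave_{\varepsilon_i=\pm1}\norm{\sum_i\varepsilon_i f_i}$, whereas we must estimate the single unsigned sum $\Ind_A=\sum_i f_i$; the role of UCC, via \eqref{eq:succ}, is precisely to let us replace the fixed all-ones coefficients by arbitrary signs up to the constant $C_1$, thereby feeding $\Ind_A$ into the random-sign average. The remaining ingredients---the block decomposition and the limit $k^{1/r-1}\to0$---are routine.
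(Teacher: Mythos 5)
Your proof is correct and follows essentially the same route as the paper's: partition $A$ into $k$ blocks of size at most $m$, use the sign-independence \eqref{eq:succ} granted by UCC to dominate $\norm{\Ind_A}$ by the average over random signs, apply the type-$r$ inequality to get the factor $k^{1/r}\df(m)$, and choose $k$ with $C_1C_2\,k^{1/r-1}\le\tfrac12$. Your closing remark correctly identifies the one point where UCC is indispensable, and no gaps remain.
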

\begin{proof}
Let $T_r$ be the $r$-type constant of $\XX$, and let $C$ be as in \eqref{eq:succ}.
Let $k$, $m\in\NN$ and $A\subseteq \NN$ be such that $\abs{A}\le k\, m$. Pick a partition $(A_j)_{j=1}^k$ of $A$ with
$\abs{A_j}\le m$ for all $j=1$,\dots, $k$. Since $\norm{ \Ind_{A_j}}\le \df(m)$,
\[
\norm{ \Ind_A}
\le C \Ave_{\varepsilon_j=\pm 1}
\norm{ \sum_{j=1}^k \varepsilon_j \Ind_{A_j}}
\le C T_r\left( \sum_{j=1}^k \norm{ \Ind_{A_j}}^r\right)^{1/r}\le C T_r k^{1/r} \udf(m).
\]
If we choose $k$ large enough, so that $C T_r k^{1/r-1}\le 1/2$, taking the supremum on $A$ we obtain
we obtain $\df(km)\le k\df(m)/2$.
\end{proof}

\begin{corollary}\label{cor:23}
Let $\XB$ be a democratic truncation quasi-greedy basis of a Banach space $\XX$. Suppose that $\XX$ has type $r>1$. Then, $\XB$ is bidemocratic.
\end{corollary}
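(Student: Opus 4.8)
The plan is to deduce this corollary by chaining together the two preceding propositions, after supplying the single missing hypothesis via the structural facts recorded earlier in this section. The key observation is that bidemocracy will follow from Proposition~\ref{prop:17} once we know that the fundamental function $\df:=\udf[\XB,\XX]$ has the URP, and that in turn will come from Proposition~\ref{prop:typeURP} once we have verified that $\XB$ is UCC.

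First I would note that, since $\XB$ is truncation quasi-greedy, it is unconditional for constant coefficients; this is precisely the implication recorded in Section~\ref{sect:preliminary} (truncation quasi-greedy $\Rightarrow$ UCC). Thus $\XB$ satisfies the hypotheses of Proposition~\ref{prop:typeURP}. Second, invoking the assumption that $\XX$ has type $r>1$, Proposition~\ref{prop:typeURP} gives that $\df=\udf[\XB,\XX]$ has the URP. Finally, $\XB$ is by hypothesis democratic and truncation quasi-greedy, and we have just shown that its fundamental function has the URP, so Proposition~\ref{prop:17} applies and yields that $\XB$ is bidemocratic, as desired.

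I do not expect any genuine obstacle here, as all the analytic content is already isolated in Propositions~\ref{prop:typeURP} and~\ref{prop:17}. The only subtlety worth flagging is the direction of the implications: one must pass through UCC to make the type hypothesis usable, rather than attempting to apply the type estimate directly. The hardest part is really conceptual bookkeeping---confirming that every hypothesis of each proposition is met by the ambient assumptions---rather than any new computation.
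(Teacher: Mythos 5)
Your proof is correct and follows exactly the paper's own one-line argument, which simply combines Proposition~\ref{prop:17} with Proposition~\ref{prop:typeURP}; your additional remark that the truncation quasi-greedy hypothesis supplies the UCC condition needed for Proposition~\ref{prop:typeURP} is precisely the detail the paper leaves implicit.
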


\begin{proof}
Just combine Proposition~\ref{prop:17} with Proposition~\ref{prop:typeURP}.
\end{proof}

\section{Greedy-type basis in $L_p$-spaces}\label{sect:main}\noindent
It is well-known that the basic-sequences structure of $L_p$ for $2<p<\infty$ is quite different from that of $L_p$ for $1<p<2$. Among others, Kadets-Pe{\l}czy{\'n}ski's milestome paper \cite{KadPel1962} brought out this asymmetry. As a matter of fact, our route toward proving Theorem~\ref{thm:main} relies on a result from this paper.
\begin{lemma}[\cite{KadPel1962}*{Corollary 5}]\label{lem:KP}
Let $2\le p<\infty$, and let $\Psi=(\bpsi_n)_{n=1}^\infty$ be a weakly-null sequence in $L_p$ with $\liminf_n \norm{\bpsi_n}_p>0$. Then, $\Psi$ has a subsequence equivalent to the unit vector system of $\ell_r$, where $r\in\{2,p\}$.
\end{lemma}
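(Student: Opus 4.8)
The plan is to prove the dichotomy through the Kadets--Pe\l czy\'nski decomposition of weakly-null sequences in $L_p$ for $p\ge 2$. After passing to a subsequence and rescaling, I would assume $\norm{\bpsi_n}_p=1$ for all $n$. The whole argument hinges on whether the $p$th powers $(\abs{\bpsi_n}^p)_{n=1}^\infty$ are equi-integrable. Since this sequence is bounded in $L_1$, a subsequence-splitting argument lets me pass to a subsequence for which exactly one of two alternatives occurs: either $(\abs{\bpsi_n}^p)_{n=1}^\infty$ is equi-integrable, or there is $\varepsilon>0$ so that the sets $A_n=\{t:\abs{\bpsi_n(t)}\ge\varepsilon\}$ carry a uniformly positive share of the $L_p$-mass while $\mu(A_n)\to 0$. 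I expect each alternative to force a different value of $r$.

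In the concentrating alternative ($r=p$), the mass of each $\bpsi_n$ escapes onto sets of vanishing measure. A standard gliding-hump extraction then produces a further subsequence $(\bpsi_{n_k})_{k=1}^\infty$ and pairwise disjoint measurable sets $(B_k)_{k=1}^\infty$ with $\norm{\bpsi_{n_k}-\bpsi_{n_k}\Ind_{B_k}}_p$ as small as desired. Disjointly supported functions in $L_p$ are isometrically equivalent to the unit vector system of $\ell_p$, so by the principle of small perturbations of basic sequences the original subsequence is equivalent to that system, giving $r=p$.

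In the equi-integrable alternative ($r=2$), I would first note that equi-integrability together with $\norm{\bpsi_n}_p=1$ forces $\liminf_n\norm{\bpsi_n}_2>0$: splitting $\int\abs{\bpsi_n}^p$ over $\{\abs{\bpsi_n}\le K\}$ and its complement and using $\abs{\bpsi_n}^p\le K^{p-2}\abs{\bpsi_n}^2$ on the former set shows that $\norm{\bpsi_n}_2\to 0$ would contradict $\norm{\bpsi_n}_p=1$. Because $(\bpsi_n)_{n=1}^\infty$ is weakly null, a Bessel/Gram--Schmidt argument yields a subsequence that is arbitrarily close to an $L_2$-orthonormal system with $L_2$-norms bounded away from $0$; on a probability space $\norm{\cdot}_2\le\norm{\cdot}_p$ for $p\ge2$, so near-orthogonality already delivers the lower estimate $\norm{\sum_k a_k\bpsi_{n_k}}_p\gtrsim(\sum_k\abs{a_k}^2)^{1/2}$.

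The crux, and the step I expect to be the main obstacle, is the matching upper estimate $\norm{\sum_k a_k\bpsi_{n_k}}_p\lesssim(\sum_k\abs{a_k}^2)^{1/2}$ in the equi-integrable case. Here I would invoke the Kadets--Pe\l czy\'nski norm-equivalence lemma: for fixed $\varepsilon>0$, on any subspace of $L_p$ whose nonzero elements $f$ all satisfy $\mu(\{\abs{f}\ge\varepsilon\norm{f}_p\})\ge\varepsilon$, the norms $\norm{\cdot}_p$ and $\norm{\cdot}_2$ are equivalent with constants depending only on $p$ and $\varepsilon$. The difficulty is that this non-concentration property is available for the individual $\bpsi_n$ but is needed for all of their linear combinations. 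To bridge this gap I would use equi-integrability to truncate, writing $\bpsi_n=g_n+h_n$ with $(g_n)_{n=1}^\infty$ uniformly bounded in $L_\infty$ and $\norm{h_n}_p$ uniformly small, and then argue that the closed span of a suitable subsequence of the uniformly bounded, nearly orthogonal, weakly-null sequence $(g_n)_{n=1}^\infty$ has its unit sphere trapped inside a fixed Kadets--Pe\l czy\'nski set. Once the span is so trapped, the norm-equivalence lemma applies uniformly to all linear combinations and closes the upper $\ell_2$ estimate, identifying the subsequence with the unit vector system of $\ell_2$ and completing the proof with $r=2$.
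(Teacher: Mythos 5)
You should first note that the paper does not prove this lemma at all---it is quoted verbatim from \cite{KadPel1962}*{Corollary 5}---so your attempt must be judged against the classical Kadets--Pe\l czy\'nski argument, whose broad outline you correctly reproduce. The genuine problem is where you draw the dichotomy. Splitting on equi-integrability of $(\abs{\bpsi_n}^p)_{n=1}^\infty$ is not exhaustive in the form you need: the subsequence-splitting principle leaves a third, mixed regime in which a fixed positive \emph{fraction} of the mass sits on sets of vanishing measure while the remaining, equi-integrable part keeps substantial norm. Concretely, on $[0,1]$ take $\bpsi_n=d_n+g_n$, where the $d_n$ are disjointly supported on sets of measure tending to $0$ with $\norm{d_n}_p=1/2$, and the $g_n$ are independent Gaussians (realized on the other half of the interval) with $\norm{g_n}_p=1/2$. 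This sequence is weakly null, no subsequence has equi-integrable $p$th powers, and yet every subsequence is equivalent to the unit vector basis of $\ell_2$, since $\norm{\sum_k a_k\bpsi_k}_p\approx\max\{(\sum_k\abs{a_k}^p)^{1/p},(\sum_k\abs{a_k}^2)^{1/2}\}\approx(\sum_k\abs{a_k}^2)^{1/2}$ for $p\ge 2$. So your claim that the non-equi-integrable alternative forces $r=p$ is false, and the gliding hump in that branch breaks down: only part of the mass concentrates, so $\norm{\bpsi_{n_k}-\bpsi_{n_k}\chi_{B_k}}_p$ cannot be made small. The correct dividing line is $\inf_n\norm{\bpsi_n}_2>0$ versus $\norm{\bpsi_n}_2\to 0$ along a subsequence (the latter gives convergence in measure, hence full-mass disjointification)---which is exactly the split the paper itself uses in the parallel Lemma~\ref{lem:2<pBis}; in the first case one must additionally peel off the disjoint part, which for $p\ge2$ satisfies an upper $\ell_2$ estimate for free.

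Second, the step you yourself identify as the crux is asserted rather than proved, and the assertion as stated is not a theorem. That ``the closed span of a suitable subsequence is trapped inside a fixed Kadets--Pe\l czy\'nski set'' does not follow from uniform boundedness, near-orthogonality, and weak nullity alone: the trigonometric system is uniformly bounded, orthonormal, and weakly null in $L_p$, yet its span contains the Dirichlet kernels $D_N$ with $\norm{D_N}_p\approx N^{1-1/p}\gg N^{1/2}\approx\norm{D_N}_2$ for $p>2$, so no Kadets--Pe\l czy\'nski set contains that span; trapping holds only for adaptively chosen (e.g., lacunary) subsequences, and the selection mechanism is precisely the missing content. The standard way to close this gap is not the trapping route at all: after truncation, extract a subsequence of the bounded parts that is a small perturbation of a martingale difference sequence with respect to an increasing sequence of finite $\sigma$-algebras, and apply Burkholder's square-function inequality, which for uniformly bounded differences yields $\norm{\sum_k a_kg_{n_k}}_p\le C_pM(\sum_k\abs{a_k}^2)^{1/2}$ directly (see \cite{AlbiacKalton2016}*{\S6.4}, where the lemma is proved along these lines). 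Until you supply such a selection argument, and repair the case analysis as above, the proof is incomplete in its concentrating branch and incorrect in its claimed dichotomy.
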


A basis $\XB$ of a quasi-Banach space $\XX$ is said to be \emph{shrinking} if its dual basis $\XB^*$ generates the whole space $\XX^*$. The space $L_p$, $1<p<\infty$, has a shrinking basis. In fact, any Schauder basis of any reflexive Banach space is shrinking \cite{James1950}.

\begin{lemma}\label{lem:Steve}
Let $(\bpsi_n)_{n=1}^\infty$ be a norm-bounded sequence in a quasi-Banach space $\XX$ with a shrinking basis $\XB=(\xx_j)_{j=1}^\infty$. Then, there are sequences $(n_j)_{j=1}^\infty$ and $(m_j)_{j=1}^\infty$ in $\NN$ such that $n_j<m_j<n_{j+1}$ for all $j\in\NN$, and $(\bpsi_{n_j}-\bpsi_{m_j})_{j=1}^\infty$ is weakly null.
\end{lemma}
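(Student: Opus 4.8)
The plan is to exploit the shrinking hypothesis to reduce weak nullity to coordinatewise nullity, and then to manufacture the interlaced indices by a diagonal extraction followed by the standard device of passing to differences of a convergent subsequence.

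First I would record the two consequences of the hypotheses that drive the argument. On the one hand, since $\XB$ is a basis, each coordinate functional $\xx_j^*$ is bounded, so for every $j\in\NN$ the scalar sequence $(\xx_j^*(\bpsi_n))_{n=1}^\infty$ is bounded by $\norm{\xx_j^*}\sup_n\norm{\bpsi_n}<\infty$. On the other hand, since $\XB$ is shrinking, the linear span of $\XB^*$ is dense in $\XX^*$. Combining this density with norm-boundedness, I claim that a norm-bounded sequence $(g_k)_{k=1}^\infty$ in $\XX$ is weakly null as soon as $\xx_j^*(g_k)\to0$ as $k\to\infty$ for every $j\in\NN$: given $h\in\XX^*$ and $\delta>0$, choose a finite linear combination $v\in\spn(\xx_j^*\colon j\in\NN)$ with $\norm{h-v}<\delta$, and estimate $\abs{h(g_k)}\le\norm{h-v}\sup_k\norm{g_k}+\abs{v(g_k)}$; the second term tends to zero because $v$ involves only finitely many coordinates, whence $\limsup_k\abs{h(g_k)}\le\delta\sup_k\norm{g_k}$, and letting $\delta\to0$ finishes the reduction. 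I note that the estimate $\abs{h(x)}\le\norm{h}\norm{x}$ remains valid in the quasi-Banach setting by homogeneity of the quasi-norm, so local convexity is not needed.

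Next I would perform a diagonal extraction. Using the boundedness of each coordinate sequence, I extract a subsequence $(\bpsi_{p_k})_{k=1}^\infty$ along which $\xx_j^*(\bpsi_{p_k})$ converges, say to $c_j$, as $k\to\infty$, simultaneously for every $j\in\NN$. Setting $n_j:=p_{2j-1}$ and $m_j:=p_{2j}$ gives $n_j<m_j<n_{j+1}$ because $(p_k)_{k=1}^\infty$ is strictly increasing, and for every fixed $j$ we have $\xx_j^*(\bpsi_{n_k}-\bpsi_{m_k})=\xx_j^*(\bpsi_{p_{2k-1}})-\xx_j^*(\bpsi_{p_{2k}})\to c_j-c_j=0$ as $k\to\infty$. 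The differences $\bpsi_{n_k}-\bpsi_{m_k}$ are norm-bounded, so the reduction of the previous paragraph applies and shows that $(\bpsi_{n_j}-\bpsi_{m_j})_{j=1}^\infty$ is weakly null, as required.

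The only genuinely delicate point is the reduction in the second paragraph: the passage from coordinatewise nullity to weak nullity rests squarely on the density of $\spn(\XB^*)$ in $\XX^*$, which is exactly what shrinkingness supplies. The rest—the diagonal argument and the interlacing—is routine bookkeeping; in particular, passing to differences is what lets us dispense with any hypothesis on $(\bpsi_n)_{n=1}^\infty$ beyond norm-boundedness, since it cancels the (possibly nonzero) coordinatewise limits $c_j$.
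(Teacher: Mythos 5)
Your proof is correct and takes essentially the same route as the paper: a Cantor diagonal extraction to make every coordinate sequence $(\xx_j^*(\bpsi_n))_n$ converge, passing to differences of interlaced terms of the subsequence to cancel the limits, and then using shrinkingness, via the density of $\spn(\xx_j^*\colon j\in\NN)$ in $\XX^*$, to upgrade coordinatewise nullity of the bounded difference sequence to weak nullity. The only difference is cosmetic: you spell out the approximation argument (including the observation that $\abs{f(x)}\le\norm{f}\,\norm{x}$ survives in the quasi-Banach setting) that the paper dismisses as ``standard.''
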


\begin{proof}
For each $j\in\NN$, the sequence $\alpha_j:=(\xx_j^*(\bpsi_n))_{n=1}^\infty$ is bounded. By the diagonal Cantor technique, passing to a subsequence we can suppose that $\alpha_j$ converges for every $j\in\NN$. Set $\bphi_n=\bpsi_{2n}-\bpsi_{2n+1}$ for all $n\in\NN$. We have $\lim_n \xx_j^*(\bphi_n)=0$ for all $j\in\NN$. A standard approximation argument yields $\lim_n f^*(\bphi_n)=0$ for every $f^*\in\XX^*$.
\end{proof}

\begin{lemma}\label{lem:37}
Let $2\le p<\infty$, and let $\Psi=(\bpsi_n)_{n=1}^\infty$ be a uniformly separated bounded sequence in $L_p$. Then, there are $r\in\{2,p\}$ and sequences $(n_j)_{j=1}^\infty$ and $(m_j)_{j=1}^\infty$ in $\NN$ with $n_j<m_j<n_{j+1}$ for all $j\in\NN$ such that $(\bpsi_{n_j}-\bpsi_{m_j})_{j=1}^\infty$ is equivalent to the unit vector system of $\ell_r$.
\end{lemma}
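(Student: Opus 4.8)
The plan is to chain together the weak-null extraction of Lemma~\ref{lem:Steve} with the Kadets--Pe\l czy\'nski dichotomy of Lemma~\ref{lem:KP}. Since $L_p$ is reflexive for $1<p<\infty$, James's theorem guarantees that it admits a shrinking Schauder basis, so Lemma~\ref{lem:Steve} applies to the norm-bounded sequence $\Psi$. It yields index sequences $(n_j)_{j=1}^\infty$ and $(m_j)_{j=1}^\infty$ with $n_j<m_j<n_{j+1}$ for all $j\in\NN$ such that the differences
\[
\bphi_j:=\bpsi_{n_j}-\bpsi_{m_j}
\]
form a weakly null sequence in $L_p$. Note that the condition $n_j<m_j<n_{j+1}$ forces $(n_j)_{j=1}^\infty$ to be strictly increasing, a fact I will use below.

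Next I would invoke the hypothesis that $\Psi$ is uniformly separated, say $\inf_{n\neq k}\norm{\bpsi_n-\bpsi_k}_p=:\delta>0$. Since $n_j\neq m_j$, this gives $\norm{\bphi_j}_p\ge\delta$ for every $j$, whence $\liminf_j\norm{\bphi_j}_p\ge\delta>0$. This is exactly the nondegeneracy condition needed to feed the weakly null sequence $(\bphi_j)_{j=1}^\infty$ into Lemma~\ref{lem:KP} (whose hypothesis $2\le p<\infty$ is available here). Applying that lemma produces some $r\in\{2,p\}$ together with a subsequence $(\bphi_{j_k})_{k=1}^\infty$ that is equivalent to the unit vector system of $\ell_r$.

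It remains to relabel and check that the required interleaving survives passing to the further subsequence $(j_k)_{k=1}^\infty$. I would take as the final index sequences $(n_{j_k})_{k=1}^\infty$ and $(m_{j_k})_{k=1}^\infty$. The inequality $n_{j_k}<m_{j_k}$ is immediate, and for the remaining inequality one uses $m_{j_k}<n_{j_k+1}$ together with the monotonicity of $(n_j)_{j=1}^\infty$ and $j_k+1\le j_{k+1}$ to conclude $m_{j_k}<n_{j_k+1}\le n_{j_{k+1}}$; hence $n_{j_k}<m_{j_k}<n_{j_{k+1}}$ for all $k$. Then $(\bpsi_{n_{j_k}}-\bpsi_{m_{j_k}})_{k=1}^\infty=(\bphi_{j_k})_{k=1}^\infty$ is equivalent to the unit vector system of $\ell_r$, as desired. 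The proof is essentially a bookkeeping assembly of the three lemmas, so there is no serious obstacle; the only point demanding care is precisely this last verification that the interleaving $n_j<m_j<n_{j+1}$ is stable under restricting to a subsequence, which is why I single out the strict monotonicity of $(n_j)_{j=1}^\infty$ at the outset.
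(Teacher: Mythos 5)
Your proposal is correct and follows exactly the paper's route: the paper proves Lemma~\ref{lem:37} by the same combination of Lemma~\ref{lem:Steve} (weak-null differences via a shrinking basis of the reflexive space $L_p$) with the Kadets--Pe{\l}czy\'nski dichotomy of Lemma~\ref{lem:KP}, with the uniform separation supplying the condition $\liminf_j\norm{\bpsi_{n_j}-\bpsi_{m_j}}_p>0$. Your careful verification that the interleaving $n_j<m_j<n_{j+1}$ survives passing to the subsequence is a detail the paper leaves implicit, and it checks out.
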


\begin{proof}
Just combine Lemma~\ref{lem:KP} with Lemma~\ref{lem:Steve}.
\end{proof}

For further reference, we record an elementary result about democracy functions. We omit its straightforward proof.
\begin{lemma}\label{lem:SBlp}
If a basis $\XB$ of a quasi-Banach space $\XX$ has a subbasis equivalent to the unit vector system of $\ell_r$, $0<r\le\infty$, then
\[
\ldf[\XB,\XX]\lesssim \pw_r \lesssim \udf[\XB,\XX].
\]
\end{lemma}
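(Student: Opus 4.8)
The plan is to unwind the definitions of the two democracy functions and exploit the transparent behavior of the $\ell_r$ unit vector system on indicator-type sums. Write $(\xx_{n_k})_{k=1}^\infty$ for the subbasis equivalent to the unit vector system of $\ell_r$ (read as $c_0$ when $r=\infty$), and let $c,C>0$ be the equivalence constants, so that $c\norm{(a_k)}_{\ell_r}\le\norm{\sum_k a_k\,\xx_{n_k}}\le C\norm{(a_k)}_{\ell_r}$ for every finitely supported scalar sequence. The whole point is that a single, explicitly chosen set of indices will simultaneously bound $\udf$ from below and $\ldf$ from above.

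First I would extract, for each $m\in\NN$, a witnessing set. Taking coefficients equal to $1$ on the first $m$ indices of the subbasis and $0$ elsewhere, and setting $A=\{n_k\colon 1\le k\le m\}$, one has $\Ind_A=\sum_{k=1}^m \xx_{n_k}$ together with $\norm{(1,\dots,1)}_{\ell_r}=m^{1/r}$ (equal to $1$ when $r=\infty$, which matches $\pw_\infty$). Hence the $\ell_r$-equivalence gives $c\,m^{1/r}\le\norm{\Ind_A}\le C\,m^{1/r}$ for a set $A$ with $\abs{A}=m$. Next I would feed this set into the two democracy functions. Since $\abs{A}=m\le m$, the definition of the fundamental function gives $\udf[\XB,\XX](m)\ge\norm{\Ind_A}\ge c\,m^{1/r}$, that is, $\pw_r\lesssim\udf[\XB,\XX]$. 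Dually, since $\abs{A}=m\ge m$, the definition of the lower democracy function gives $\ldf[\XB,\XX](m)\le\norm{\Ind_A}\le C\,m^{1/r}$, that is, $\ldf[\XB,\XX]\lesssim\pw_r$.

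There is no serious obstacle here; the only point requiring a little care is bookkeeping the directions of the two suprema and infima so that the same witnessing set $A$ of cardinality exactly $m$ plays both roles, and checking that the degenerate normalization $m^{1/r}\equiv 1$ in the case $r=\infty$ is the correct reading of $\pw_\infty$.
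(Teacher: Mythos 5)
Your proof is correct, and it is precisely the straightforward argument the paper has in mind when it omits the proof: a single set $A=\{n_1,\dots,n_m\}$ of cardinality exactly $m$ witnesses both bounds, since $\udf(m)\ge\norm{\Ind_A}\gtrsim m^{1/r}$ and $\ldf(m)\le\norm{\Ind_A}\lesssim m^{1/r}$ by the $\ell_r$-equivalence (with the reading $m^{1/\infty}=1$ for $c_0$, as you note). Nothing to add.
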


\begin{proposition}\label{prop:2<p}
Let $2\le p<\infty$, and let $\Psi=(\bpsi_n)_{n=1}^\infty$ be a UCC basic sequence in $L_p$. Then, for either $r=2$ or $r=p$,
\begin{equation*}
\ldf[\Psi,L_p]\lesssim \pw_r \lesssim \udf[\Psi,L_p].
\end{equation*}
\end{proposition}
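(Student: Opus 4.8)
The plan is to apply Lemma~\ref{lem:37} to $\Psi$ and then transfer the $\ell_r$-structure of a difference subsequence into two-sided estimates for the democracy functions of $\Psi$, using the UCC hypothesis to move between signed and unsigned indicator sums.

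First I would check that $\Psi$ meets the hypotheses of Lemma~\ref{lem:37}, i.e., that it is a uniformly separated bounded sequence. Boundedness is immediate, since a basic sequence is by convention semi-normalized. For the uniform separation, fix $n\not=m$. Comparing $\{n\}\subseteq\{n,m\}$ via the UCC property gives $\norm{\bpsi_n}\lesssim \norm{\Ind_{\{n,m\}}}$, and replacing the constant signs by the signs $(1,-1)$ through \eqref{eq:succ} yields $\norm{\bpsi_n-\bpsi_m}\approx\norm{\Ind_{\{n,m\}}}\gtrsim\norm{\bpsi_n}\approx 1$, with constants independent of $n$ and $m$. Hence $\Psi$ is uniformly separated.

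Lemma~\ref{lem:37} then provides $r\in\{2,p\}$ and indices $n_j<m_j<n_{j+1}$ such that $\bphi_j:=\bpsi_{n_j}-\bpsi_{m_j}$ is equivalent to the unit vector system of $\ell_r$; in particular $\norm{\sum_{j\in A}\bphi_j}\approx\abs{A}^{1/r}$ for every finite $A\subseteq\NN$. For $m\in\NN$ set $B_m=\{n_j\colon 1\le j\le m\}\cup\{m_j\colon 1\le j\le m\}$, which has cardinality $2m$. Since the chosen indices are pairwise distinct, $\sum_{j=1}^m\bphi_j=\Ind_{\varepsilon,B_m}$, where $\varepsilon$ equals $1$ on each $n_j$ and $-1$ on each $m_j$. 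Passing to constant signs through \eqref{eq:succ} gives
\[
\norm{\Ind_{B_m}}\approx\norm{\Ind_{\varepsilon,B_m}}=\norm{\sum_{j=1}^m\bphi_j}\approx m^{1/r}.
\]

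It remains to read off the two inequalities from the sets $B_m$. Because $\abs{B_m}=2m$, the definitions of the democracy functions give $\udf(2m)\ge\norm{\Ind_{B_m}}\gtrsim m^{1/r}$ and $\ldf(2m)\le\norm{\Ind_{B_m}}\lesssim m^{1/r}$. Both $\udf$ and $\ldf$ are non-decreasing, so extending the estimates from even to all arguments by a routine doubling argument yields $\pw_r\lesssim\udf[\Psi,L_p]$ and $\ldf[\Psi,L_p]\lesssim\pw_r$ for the single value $r$ furnished above, which is the assertion. The only delicate point is the first step: establishing uniform separation, together with the conversion between signed and unsigned indicator sums. Both rest squarely on the UCC hypothesis, so beyond bookkeeping with the factor $2$ in the cardinalities I anticipate no real obstacle.
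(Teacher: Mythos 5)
Your proof is correct and follows essentially the same route as the paper: it invokes Lemma~\ref{lem:37} to extract a difference subsequence equivalent to the unit vector system of $\ell_r$, $r\in\{2,p\}$, and then uses the UCC hypothesis together with \eqref{eq:succ} to transfer the resulting estimates for $\norm{\Ind_{B_m}}$ to the democracy functions of $\Psi$, exactly as the paper does via Lemma~\ref{lem:SBlp} and the doubling of $\udf[\Psi,L_p]$. A minor plus is that you explicitly verify the uniform separation hypothesis of Lemma~\ref{lem:37} (which the paper leaves implicit); your verification via UCC is valid, though it also follows at once from the norm-boundedness of the dual basis, since $\xx_n^*(\bpsi_n-\bpsi_m)=1$ for $n\ne m$.
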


\begin{proof}
By Lemma~\ref{lem:37} and Lemma~\ref{lem:SBlp}, there are $r\in\{2,p\}$ and $(n_j)_{j=1}^\infty$ and $(m_j)_{j=1}^\infty$ in $\NN$ with $n_j<m_j<n_{j+1}$ for all $j\in\NN$ such that the basic sequence $\Phi=(\bpsi_{n_j}-\bpsi_{m_j})_{j=1}^\infty$ satisfies
\[
\ldf[\Phi,L_p] \lesssim \pw_r \lesssim \udf[\Phi,L_p].
\]
Since $\Psi$ is UCC, $\ldf[\Psi,L_p]\lesssim \ldf[\Phi,L_p]$. In turn, since $\udf[\Psi,L_p]$ is doubling (see, e.g., \cite{AABW2021}*{\S8}), $\udf[\Phi,L_p]\lesssim \udf[\Psi,L_p]$.
\end{proof}

We can also prove Proposition~\ref{prop:2<p} using ideas from \cite{AACV2019}. Let us detail this alternative approach.
\begin{lemma}\label{lem:1}
Let $(\Omega,\Sigma,\mu)$ be a finite measure space and let $1\le p<\infty$. Suppose that $\Psi=(\bpsi_n)_{n=1}^\infty$ is an UCC basic sequence in $L_p(\mu)$ with
$\norm{\bpsi_n}_2\approx 1$ for $n\in\NN$. Then, $\Psi$ is democratic, and its fundamental function grows as $\pw_2$.
\end{lemma}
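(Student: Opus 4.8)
The plan is to establish the uniform two-sided estimate $\norm{\Ind_A}_p\approx\abs{A}^{1/2}$ for every finite $A\subseteq\NN$, with constants independent of $A$. Granting this, for $\abs{A}\le m$ we get $\norm{\Ind_A}_p\lesssim m^{1/2}$, while choosing any $A$ with $\abs{A}=m$ shows that the bound $m^{1/2}$ is attained up to a constant; hence $\udf[\Psi,L_p(\mu)]\approx\ldf[\Psi,L_p(\mu)]\approx\pw_2$, so that $\Psi$ is democratic and its fundamental function grows as $\pw_2$.

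First I would pass from the sum $\Ind_A$ to the square function $g_A:=(\sum_{n\in A}\abs{\bpsi_n}^2)^{1/2}$. Applying Khintchine's inequality fiberwise (i.e.\@ for each fixed $\omega$ to the scalars $(\bpsi_n(\omega))_{n\in A}$) and integrating in $\omega$ yields $(\Ave_{\varepsilon\in\EE^A}\norm{\Ind_{\varepsilon,A}}_p^p)^{1/p}\approx\norm{g_A}_p$, with constants depending only on $p$. Since $\Psi$ is UCC, \eqref{eq:succ} gives $\norm{\Ind_{\varepsilon,A}}_p\approx\norm{\Ind_A}_p$ for every $\varepsilon\in\EE^A$, so the sign-average may be replaced by $\norm{\Ind_A}_p$; thus $\norm{\Ind_A}_p\approx\norm{g_A}_p$ for all finite $A$ and all $1\le p<\infty$. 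The $L_2$-normalization enters through the orthogonality of the Rademacher expansion: $\norm{g_A}_2^2=\sum_{n\in A}\norm{\bpsi_n}_2^2\approx\abs{A}$, so $\norm{g_A}_2\approx\abs{A}^{1/2}$. I also record that $\Psi$, being a basic sequence, is semi-normalized in $L_p(\mu)$, so $\norm{\bpsi_n}_p\approx1$.

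With these reductions, three of the four needed inequalities are routine. For the upper bound when $2\le p$, the triangle inequality in $L_{p/2}(\mu)$ (legitimate since $p/2\ge1$) gives $\norm{g_A}_p^2=\norm{\sum_{n\in A}\abs{\bpsi_n}^2}_{p/2}\le\sum_{n\in A}\norm{\bpsi_n}_p^2\approx\abs{A}$; equivalently, one may invoke that $L_p(\mu)$ has type $2$. For $1\le p\le2$, the finiteness of $\mu$ yields the embedding $L_2(\mu)\hookrightarrow L_p(\mu)$, so $\norm{g_A}_p\lesssim\norm{g_A}_2\approx\abs{A}^{1/2}$, which is the upper bound; symmetrically, for $2\le p$ the embedding $L_p(\mu)\hookrightarrow L_2(\mu)$ gives $\norm{g_A}_p\gtrsim\norm{g_A}_2\approx\abs{A}^{1/2}$, the lower bound.

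The one genuinely non-elementary point, and what I expect to be the main obstacle, is the lower bound $\norm{\Ind_A}_p\gtrsim\abs{A}^{1/2}$ in the range $1\le p<2$: here both the finite-measure embedding and the pointwise square-function estimates run the wrong way, and no elementary manipulation of $g_A$ suffices, since one cannot control the $L_s(\mu)$-norms of $g_A$ for $s>2$ (the functions $\bpsi_n$ need not even lie in $L_s$). The correct tool is the cotype-$2$ property of $L_p(\mu)$ for $p\le2$: applying the cotype-$2$ inequality to $(\bpsi_n)_{n\in A}$ gives $(\sum_{n\in A}\norm{\bpsi_n}_p^2)^{1/2}\lesssim\Ave_{\varepsilon}\norm{\Ind_{\varepsilon,A}}_p\approx\norm{\Ind_A}_p$, and the left-hand side is $\approx\abs{A}^{1/2}$ because $\norm{\bpsi_n}_p\approx1$. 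Collecting the four estimates gives $\norm{\Ind_A}_p\approx\abs{A}^{1/2}$ uniformly, which completes the argument as described in the first paragraph.
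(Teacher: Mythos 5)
Your proof is correct and takes essentially the same route as the paper, whose own proof consists of citing \cite{AACV2019}*{Proposition~2.5} and observing that its Lemma~2.1 --- exactly your fiberwise--Khintchine equivalence $\norm{\Ind_A}_p\approx\norm{(\sum_{n\in A}\abs{\bpsi_n}^2)^{1/2}}_p$ --- requires only the UCC property, the very reduction you make. Your four norm estimates for the square function (Minkowski in $L_{p/2}$ for $p\ge 2$, H\"older on the finite measure space in the two easy directions, and cotype $2$ for the lower bound when $1\le p<2$) are the computations that proof runs on; note that invoking cotype $2$ of $L_p(\mu)$, $p\le 2$, is just a packaged form of the reverse Minkowski inequality $\norm{\sum_{n\in A}\abs{\bpsi_n}^2}_{p/2}\ge\sum_{n\in A}\norm{\bpsi_n}_p^2$ together with Khintchine (which you already have), and it is precisely the $L_\infty$-free argument that lets the lemma dispense with the uniform boundedness assumed in \cite{AACV2019}.
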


\begin{proof}
The proof of \cite{AACV2019}*{Proposition 2.5} remains valid in this slightly more general framework. In fact, all we need to prove \cite{AACV2019}*{Lemma 2.1} is that the involved basis is UCC.
\end{proof}

We will use a gliding-hump-type lemma for $L_p$-spaces.
\begin{lemma}\label{lem:2}
Let $0<q<p<\infty$, let $(\Omega,\Sigma,\mu)$ be a finite measure space, and let $(\epsilon_k)_{k=1}^\infty$ be a sequence of positive scalars. For each $n\in\NN$, let $\bpsi_n\colon\Omega\to\FF$ be a measurable function. Suppose that $\sup_n \norm{\bpsi_n}_p<\infty$ and $\liminf_n \norm{\bpsi_n}_q=0$. Then, there is an increasing sequence $(n_k)_{k=1}^\infty$ of positive integers and a sequence $(A_k)_{k=1}^\infty$ of pairwise disjoint sets in $\Sigma$ such that
\[
\norm{\bpsi_{n_k} -\bpsi_{n_k}\chi_{A_k}}_p \le \epsilon_k, \quad k\in\NN.
\]
\end{lemma}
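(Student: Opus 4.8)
The plan is to prove Lemma~\ref{lem:2} by a standard gliding-hump (exhaustion) argument, extracting the subsequence $(n_k)$ one index at a time while simultaneously building the disjoint sets $A_k$. The key structural fact I would exploit is that since $\mu$ is a \emph{finite} measure and $q<p$, the $L_p$-boundedness $\sup_n\norm{\bpsi_n}_p=:M<\infty$ gives uniform integrability of the family $(\abs{\bpsi_n}^q)_n$: by H\"older's inequality, for any measurable set $E$,
\[
\int_E \abs{\bpsi_n}^q \, d\mu \le \norm{\bpsi_n}_p^q \, \mu(E)^{1-q/p} \le M^q \, \mu(E)^{1-q/p}.
\]
Thus the $L_q$-mass of each $\bpsi_n$ carried by a set of small measure is uniformly small. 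This is the engine that lets me control tails.

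First I would fix, for each $k$, a threshold $\lambda_k>0$ (to be chosen large) and split $\Omega$ according to the level set $\{\abs{\bpsi_n}>\lambda\}$. The idea is that the part of $\bpsi_n$ where it is large lives on a set of small measure, because by Chebyshev $\mu(\{\abs{\bpsi_n}>\lambda\})\le M^p/\lambda^p$, and on this small-measure set the $L_p$-mass is small by the uniform-integrability estimate above. Meanwhile, the part of $\bpsi_n$ where it is small, namely $\abs{\bpsi_n}\le\lambda$, can be controlled in $L_p$ by the $L_q$-norm: on that region $\abs{\bpsi_n}^p\le \lambda^{p-q}\abs{\bpsi_n}^q$, so
\[
\int_{\{\abs{\bpsi_n}\le\lambda\}} \abs{\bpsi_n}^p \, d\mu \le \lambda^{p-q}\norm{\bpsi_n}_q^q.
\]
Since $\liminf_n\norm{\bpsi_n}_q=0$, I can pass to a subsequence along which $\norm{\bpsi_n}_q\to 0$, so this second contribution is forced to be small once $n$ is chosen large relative to $\lambda$.

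The iteration then proceeds as follows. At step $k$, having already fixed $n_1<\dots<n_{k-1}$ and disjoint sets $A_1,\dots,A_{k-1}$, I first choose $\lambda_k$ large enough that the high-level contribution $\int_{\{\abs{\bpsi_n}>\lambda_k\}}\abs{\bpsi_n}^p\,d\mu$ is below $(\epsilon_k/2)^p$ uniformly in $n$ (this uses only uniform integrability and Chebyshev, not the particular $n$). Then, using $\liminf_n\norm{\bpsi_n}_q=0$, I pick $n_k$ larger than $n_{k-1}$ with $\norm{\bpsi_{n_k}}_q$ so small that $\lambda_k^{p-q}\norm{\bpsi_{n_k}}_q^q<(\epsilon_k/2)^p$. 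Finally I set $A_k:=\{\abs{\bpsi_{n_k}}>\lambda_k\}$. By construction,
\[
\norm{\bpsi_{n_k}-\bpsi_{n_k}\chi_{A_k}}_p^p
=\int_{\{\abs{\bpsi_{n_k}}\le\lambda_k\}}\abs{\bpsi_{n_k}}^p\,d\mu
\le \lambda_k^{p-q}\norm{\bpsi_{n_k}}_q^q < (\epsilon_k/2)^p \le \epsilon_k^p,
\]
which gives the desired bound. The one point needing care is disjointness of the $A_k$: the sets $\{\abs{\bpsi_{n_k}}>\lambda_k\}$ are not disjoint as written, so I would instead subtract off the previously used sets, defining $A_k:=\{\abs{\bpsi_{n_k}}>\lambda_k\}\setminus\bigcup_{i<k}A_i$, and absorb the resulting extra error by sharpening the high-level threshold. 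Concretely, once $\lambda_k$ is fixed I shrink further so that the $L_p$-mass of $\bpsi_{n_k}$ on $\bigcup_{i<k}A_i$ is also below $(\epsilon_k/2)^p$ — again possible because each $A_i$ has small measure and I may take $\lambda_k\to\infty$, or more simply because the replacement only removes mass, so $\norm{\bpsi_{n_k}-\bpsi_{n_k}\chi_{A_k}}_p\le\norm{\bpsi_{n_k}-\bpsi_{n_k}\chi_{\{\abs{\bpsi_{n_k}}>\lambda_k\}}}_p+\norm{\bpsi_{n_k}\chi_{\{\abs{\bpsi_{n_k}}>\lambda_k\}\cap\bigcup_{i<k}A_i}}_p$.

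The main obstacle I anticipate is precisely this bookkeeping of disjointness versus the error bound: the cleanest statement would set $A_k$ to be the full high-level set, but then disjointness fails, and repairing it requires either a careful ordering of the choices (choose $n_k$ so large that $\bpsi_{n_k}$ already has negligible $L_p$-mass on the finitely many previously fixed sets, which is \emph{not} automatic without further hypotheses) or an additional shrinking step. Getting this interplay right — exploiting that each $A_i$ has controllably small measure so that a latecomer's mass there is forced small by uniform integrability — is the only genuinely delicate part; everything else is the routine Chebyshev-plus-H\"older splitting above.
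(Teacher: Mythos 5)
Your Chebyshev-plus-H\"older splitting is sound and is indeed the engine of the result, but your disjointification step has a genuine gap, in two places. (i) The claim that $\lambda_k$ can be chosen so that $\int_{\{\abs{\bpsi_n}>\lambda_k\}}\abs{\bpsi_n}^p\,d\mu<(\epsilon_k/2)^p$ \emph{uniformly in} $n$ is false: $\sup_n\norm{\bpsi_n}_p<\infty$ gives uniform integrability of the $q$-th powers (your H\"older estimate), not of the $p$-th powers. For $\bpsi_n=n^{1/p}\chi_{(0,1/n)}$ on $[0,1]$ one has $\norm{\bpsi_n}_p=1$ and $\norm{\bpsi_n}_q\to0$, yet $\int_{\{\abs{\bpsi_n}>\lambda\}}\abs{\bpsi_n}^p\,d\mu=1$ whenever $n>\lambda^p$. (ii) More seriously, your repair of disjointness, $A_k:=\{\abs{\bpsi_{n_k}}>\lambda_k\}\setminus\bigcup_{i<k}A_i$, fails on this same example no matter how $\lambda_k$ and $n_k$ are chosen: at stage $1$ (assuming $\epsilon_1<1$) you are forced to take $\lambda_1<n_1^{1/p}$, so $A_1=(0,1/n_1)$, and every later $\bpsi_n$ is supported inside $A_1$; hence $A_k=\emptyset$ and $\norm{\bpsi_{n_k}-\bpsi_{n_k}\chi_{A_k}}_p=1$ for every $k\ge2$. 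You flag this point yourself (``not automatic without further hypotheses''), but neither of your proposed fixes addresses it: both still require $\int_{\bigcup_{i<k}A_i}\abs{\bpsi_{n_k}}^p\,d\mu$ to be small, and by (i) the small measure of the earlier $A_i$ buys you nothing for $p$-th powers of functions chosen later.

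The correct bookkeeping reverses the subtraction: put $B_k=\{\abs{\bpsi_{n_k}}>\lambda_k\}$ and $A_k=B_k\setminus\bigcup_{j>k}B_j$. Disjointness is automatic, and the extra error $\sum_{j>k}\int_{B_j}\abs{\bpsi_{n_k}}^p\,d\mu$ is now controllable, because each fixed $\abs{\bpsi_{n_k}}^p$ is a single $L_1$ function, whose integral is absolutely continuous, while $\mu(B_j)\le M^p/\lambda_j^p$ is arranged at stage $j$, when only the finitely many already-chosen functions $\bpsi_{n_1},\dots,\bpsi_{n_{j-1}}$ need to be accommodated (e.g.\ demand $\int_{B_j}\abs{\bpsi_{n_i}}^p\,d\mu\le \epsilon_i^p2^{-j}$ for $i<j$). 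The quantifier order is the whole point: you may shrink the later humps to suit the earlier functions, but you cannot shrink the earlier humps to suit later, not-yet-chosen functions whose $p$-th powers are not uniformly integrable. This reversed-subtraction disjointification is precisely the Kadets--Pe{\l}czy\'nski-type subsequence lemma that the paper invokes: its proof replaces $\bpsi_n$ by $\abs{\bpsi_n}^p$ to reduce to $p=1$, observes that $\norm{\bpsi_n}_q\to0$ along a subsequence gives convergence to zero in measure, and cites \cite{AlbiacKalton2016}*{Theorem 5.2.1}. With the subtraction reversed (and the spurious uniform-integrability claim deleted), your argument becomes a correct self-contained proof in the same spirit as the paper's.
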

\begin{proof}
Replacing $\bpsi_n$ with $\abs{\bpsi_n}^p$, and passing to suitable subsequence, we can assume that $p=1$ and $\lim_n \norm{\bpsi_n}_q=0$. Then, $(\bpsi_n)_{n=1}^\infty$ converges to zero in measure. Applying \cite{AlbiacKalton2016}*{Theorem 5.2.1}, and passing to a further subsequence, yields the desired result.
\end{proof}

In light of Lemma~\ref{lem:SBlp}, we can derive Proposition~\ref{prop:2<p} from Lemma~\ref{lem:2<pBis} below.

\begin{lemma}\label{lem:2<pBis}
Let $2\le p<\infty$, and let $\Psi=(\bpsi_n)_{n=1}^\infty$ be a UCC basic sequence in $L_p$. Then, either
\begin{equation}\label{eq:Deml2}
\ldf[\Psi,L_p]\approx \pw_2\approx \udf[\Psi,L_p]
\end{equation}
or $\Psi$ has a subbasis equivalent to the unit vector system of $\ell_p$.
Moreover, in the case when $p>2$, \eqref{eq:Deml2} holds if and only if $\inf_n \norm{\bpsi_n}_2>0$.
\end{lemma}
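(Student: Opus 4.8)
The plan is to analyze the $L_2$-norms of the basis vectors, splitting into cases according to whether $\inf_n \norm{\bpsi_n}_2 > 0$ or not. First I would dispose of the easy direction: if $\inf_n\norm{\bpsi_n}_2>0$, then since $\Psi$ is bounded in $L_p$ and $p\ge 2$ gives $\norm{\cdot}_2\le\norm{\cdot}_p$ (on the finite measure space $[0,1]$), we also have $\norm{\bpsi_n}_2\approx 1$ for $n\in\NN$. Lemma~\ref{lem:1} then applies directly (with $\mu$ Lebesgue measure) and yields that $\Psi$ is democratic with fundamental function $\approx\pw_2$, which is exactly \eqref{eq:Deml2}. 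This also settles the ``if'' part of the final moreover clause.

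The substantive case is $\inf_n\norm{\bpsi_n}_2=0$. Here I would pass to a subsequence along which $\norm{\bpsi_{n_k}}_2\to 0$; the idea is to extract from this subsequence a further subbasis that is equivalent to the unit vector system of $\ell_p$, which will simultaneously handle the main dichotomy and (when $p>2$) show that \eqref{eq:Deml2} must fail, since a sequence democracy-equivalent to $\ell_p$ with $p>2$ cannot have fundamental function $\approx\pw_2$ (by Lemma~\ref{lem:SBlp}, such a subbasis forces $\pw_p\lesssim\udf[\Psi,L_p]$, and $\pw_p\not\approx\pw_2$). To produce the $\ell_p$-subbasis I would invoke the gliding-hump Lemma~\ref{lem:2} with $q=2<p$: the hypotheses $\sup_n\norm{\bpsi_n}_p<\infty$ and $\liminf_n\norm{\bpsi_n}_2=0$ are met, so we obtain an increasing sequence $(n_k)$ and pairwise disjoint sets $(A_k)$ with $\norm{\bpsi_{n_k}-\bpsi_{n_k}\chi_{A_k}}_p$ as small as we please, say summably small relative to the basic-sequence constant.

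The functions $\bpsi_{n_k}\chi_{A_k}$ have pairwise disjoint supports, so for $p\ge 2$ they behave like a block basis of $\ell_p$: since these are disjointly supported in $L_p$, any linear combination satisfies $\norm{\sum_k a_k \bpsi_{n_k}\chi_{A_k}}_p = (\sum_k \abs{a_k}^p\norm{\bpsi_{n_k}\chi_{A_k}}_p^p)^{1/p}$, and because $\Psi$ is semi-normalized while the tails are small, the norms $\norm{\bpsi_{n_k}\chi_{A_k}}_p$ stay bounded away from $0$ and $\infty$, giving equivalence to the unit vector system of $\ell_p$. A standard perturbation argument then transfers this equivalence from $(\bpsi_{n_k}\chi_{A_k})$ back to $(\bpsi_{n_k})$: making the deviations summable small against the basis constant guarantees, via the principle of small perturbations (a standard consequence of a Neumann-series/difference estimate), that $(\bpsi_{n_k})$ is itself equivalent to the $\ell_p$ unit vector basis. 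This establishes the dichotomy. For the ``only if'' in the case $p>2$, one notes conversely that if \eqref{eq:Deml2} held, then $\udf[\Psi,L_p]\approx\pw_2$, which is incompatible with the presence of an $\ell_p$-subbasis when $p>2$; hence $\inf_n\norm{\bpsi_n}_2>0$ is forced.

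I expect the perturbation step to be the main technical obstacle, namely quantifying how small the gliding-hump errors $\epsilon_k$ must be, measured against the lower and upper basic-sequence constants of $\Psi$, so that the passage from the disjointly supported blocks to the original $\bpsi_{n_k}$ preserves equivalence to $\ell_p$. The disjoint-support computation is clean, and the finite-measure inclusion $L_p\hookrightarrow L_2$ makes the two cases exhaustive, so the heart of the argument is ensuring the perturbation is genuinely controlled rather than merely heuristically small.
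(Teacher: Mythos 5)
Your proposal follows essentially the same route as the paper's proof: the dichotomy on $\inf_n\norm{\bpsi_n}_2$, with Lemma~\ref{lem:1} handling the case $\inf_n\norm{\bpsi_n}_2>0$, and the gliding-hump Lemma~\ref{lem:2} (with $q=2$) producing disjointly supported, semi-normalized blocks $\bpsi_{n_k}\chi_{A_k}$ that are $1$-unconditional and isometrically equivalent to the $\ell_p$ unit vector system, followed by the principle of small perturbations to transfer the equivalence back to $(\bpsi_{n_k})_{k=1}^\infty$. The paper even makes your ``summably small against the basic-sequence constant'' requirement explicit by choosing $0<\epsilon_k<c:=\inf_n\norm{\bpsi_n}_p$ with $\sum_{k=1}^\infty \epsilon_k\,(c^p-\epsilon_k^p)^{-1/p}<1$, so the perturbation step you flag as the main technical obstacle is handled exactly as you anticipate.

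One correction to your justification of the ``moreover'' clause: you argue that an $\ell_p$-subbasis forces $\pw_p\lesssim\udf[\Psi,L_p]$, which would contradict $\udf[\Psi,L_p]\approx\pw_2$. But for $p>2$ one has $m^{1/p}\le m^{1/2}$, so $\pw_p\lesssim\pw_2$ holds trivially and there is no contradiction on that side of Lemma~\ref{lem:SBlp}. The incompatibility comes from the \emph{lower} estimate in that lemma: an $\ell_p$-subbasis gives $\ldf[\Psi,L_p]\lesssim\pw_p$, whereas \eqref{eq:Deml2} gives $\ldf[\Psi,L_p]\approx\pw_2$, forcing $m^{1/2}\lesssim m^{1/p}$, which fails for $p>2$. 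The fix is immediate and stays within the same cited lemma, so this is a local slip rather than a structural gap; with that inequality turned around, your argument is complete and matches the paper's.
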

\begin{proof}
We shall consider two opposite situations.

\noindent$\bullet$ Suppose that $\inf_n \norm{\bpsi_n}_2>0$. Then, \eqref{eq:Deml2} holds by Lemma~\ref{lem:1}.

\noindent$\bullet$ Suppose that $\inf_n \norm{\bpsi_n}_2=0$. Pick a sequence $(\epsilon_k)_{k=1}^\infty$ such that
\[
0<\epsilon_k<c:=\inf_n \norm{\bpsi_n}_p,\quad k\in\NN,
\]
and
\[
\sum_{k=1}^\infty \frac{\epsilon_k}{(c^p-\epsilon_k^p)^{1/p}}<1.
\]
By Lemma~\ref{lem:2} there is pairwise disjoint sequence $(A_k)_{k=1}^\infty$ consisting of measurable sets and an increasing sequence $(n_k)_{k=1}^\infty$ in $\NN$ such that
\[
\norm{ \bpsi_{n_k} -\bphi_k }_p\le\epsilon_k, \quad k\in\NN,
\]
where $\bphi_k=\bpsi_{n_k} \chi_{A_k} $. We have
\[
(c^p-\epsilon_k^p)^{1/p} \le \norm{\bphi_k }_p \le \norm{\bpsi_{n_k}}_p, \quad k\in\NN,
\]
whence we infer that $\Phi:=(\bphi_k)_{k=1}^\infty$ semi-normalized. Therefore, $\Phi$ is a $1$-unconditional basic sequence equivalent to the unit vector system of $\ell_p$. In turn, by the principle of small perturbations (see, e.g., \cite{AlbiacKalton2016}*{Theorem 1.3.9}), $(\bpsi_{n_k})_{k=1}^\infty$ is equivalent to $\Phi$.
\end{proof}

Let $1<p<2$. Since $\ell_r$ is a subspace $L_p$ for every $p\le r\le 2$ \cite{Rosenthal1972}, $L_p$ has, for each $r\in[p,2]$, a greedy basic sequence whose fundamental function grows as $\pw_r$. The situation in the case where $p>2$ is quite different.

\begin{corollary}\label{cor:38}
Let $2\le p<\infty$, and let $\df$ be the fundamental function of a super-democratic basic sequence in $L_p$. Then, either $\df\approx\pw_2$ or $\df\approx\pw_p$.
\end{corollary}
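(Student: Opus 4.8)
The plan is to read this off directly from Proposition~\ref{prop:2<p}, the only new ingredient being the observation that super-democracy collapses the two-sided bound there into a genuine equivalence. First I would unwind the hypothesis: a super-democratic basic sequence $\Psi=(\bpsi_n)_{n=1}^\infty$ is in particular UCC (apply the super-democracy inequality to nested sets $A\subseteq B$, for which $\abs{A}\le\abs{B}$ automatically, and one recovers the UCC estimate), and it is democratic. As recorded immediately after the definition of super-democracy, UCC gives $\ldf[\Psi,L_p]\lesssim\udf[\Psi,L_p]$ while democracy gives the reverse bound, so
\[
\ldf[\Psi,L_p]\approx\df\approx\udf[\Psi,L_p],
\]
where $\df=\udf[\Psi,L_p]$ is the fundamental function under consideration.

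Next, since $\Psi$ is UCC and $2\le p<\infty$, Proposition~\ref{prop:2<p} applies and supplies an index $r\in\{2,p\}$ with
\[
\ldf[\Psi,L_p]\lesssim\pw_r\lesssim\udf[\Psi,L_p].
\]
Substituting the equivalences $\ldf[\Psi,L_p]\approx\df\approx\udf[\Psi,L_p]$ from the previous paragraph into this chain yields $\df\lesssim\pw_r\lesssim\df$, that is, $\df\approx\pw_r$ for that same $r\in\{2,p\}$. This is exactly the dichotomy asserted in the corollary.

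Since Proposition~\ref{prop:2<p} already packages the substantive content — the Kadets--Pe\l czy\'nski subsequence extraction of Lemma~\ref{lem:KP} combined with the weak-nullification of differences from Lemma~\ref{lem:Steve} (via Lemma~\ref{lem:37}), or alternatively the gliding-hump argument underlying Lemma~\ref{lem:2<pBis} — there is no genuine obstacle left at the level of the corollary. The one point I would state with care is that the quantifier structure matches: the index $r$ produced by Proposition~\ref{prop:2<p} is precisely the one appearing in the conclusion, so no separate case analysis on $r$ is needed, and super-democracy is used only to turn the sandwiching of $\pw_r$ between $\ldf$ and $\udf$ into a single equivalence $\df\approx\pw_r$.
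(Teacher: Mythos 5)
Your proposal is correct and coincides with the paper's own proof, which simply declares the corollary ``a straightforward consequence of Proposition~\ref{prop:2<p}'': super-democracy yields democracy and UCC, hence $\ldf[\Psi,L_p]\approx\udf[\Psi,L_p]$, and the sandwich $\ldf\lesssim\pw_r\lesssim\udf$ from the proposition collapses to $\df\approx\pw_r$. You have merely made explicit the routine steps the paper leaves to the reader, so there is nothing to add.
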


\begin{proof}
It is a straightforward consequence of Proposition~\ref{prop:2<p}.
\end{proof}

Our approach also leads to settling the structure of subsymmetric basic sequences of $L_p$, $p>2$. Although this result is probably well-known to specialists, we make a detour on our route to record it. Recall that a sequence in a Banach space is said to be a subsymmetric basis if it is an unconditional basis and it is equivalent to all its subsequences. All we need to know about this important class of bases is the following.

\begin{lemma}[see \cite{AADK2021}*{Lemma 2.2}]\label{lem:SubSym}
Let $\XB=(\xx_n)_{n=1}^\infty$ be a subsymmetric basis of a quasi-Banach space.
Let $(n_j)_{j=1}^\infty$ and $(m_j)_{j=1}^\infty$ in $\NN$ be sequences in $\NN$ with $n_j<m_j<n_{j+1}$ for all $j\in\NN$.
Then, $(\xx_{n_j}-\xx_{m_j})_{n=1}^\infty$ is equivalent to $\XB$.
\end{lemma}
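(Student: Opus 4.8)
The plan is to read off the equivalence directly from the two defining features of a subsymmetric basis: its unconditionality, with constant $K_u=K_u[\XB,\XX]$, and its equivalence to every subsequence, with a uniform constant $C$ (so that $\norm{\sum_j a_j \xx_{k_j}}\approx\norm{\sum_j a_j \xx_j}$ for every strictly increasing $(k_j)_{j=1}^\infty$). Writing $\Phi=(\xx_{n_j}-\xx_{m_j})_{j=1}^\infty$, the goal reduces to the two-sided estimate $\norm{\sum_j a_j(\xx_{n_j}-\xx_{m_j})}\approx\norm{\sum_j a_j \xx_j}$ for all finitely supported scalar sequences $(a_j)$, which is precisely equivalence of the two basic sequences. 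The structural fact that makes everything work is that, because $n_j<m_j<n_{j+1}$ for all $j$, the indices $n_1,m_1,n_2,m_2,\dots$ are pairwise distinct and strictly increasing, so both $(\xx_{n_j})_{j=1}^\infty$ and $(\xx_{m_j})_{j=1}^\infty$ are genuine subsequences of $\XB$.

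For the fact that $\XB$ dominates $\Phi$, I would fix $(a_j)$, split the sum, and apply the quasi-norm modulus $\kappa$ of $\XX$ followed by subsymmetry to each subsequence:
\[
\norm{\sum_j a_j(\xx_{n_j}-\xx_{m_j})}\le \kappa\left(\norm{\sum_j a_j \xx_{n_j}}+\norm{\sum_j a_j \xx_{m_j}}\right)\le 2\kappa C\,\norm{\sum_j a_j \xx_j}.
\]
For the reverse domination, the key observation is that the infinite set $A=\{n_j:j\in\NN\}$ selects exactly the ``positive half'' of the support of $\sum_j a_j(\xx_{n_j}-\xx_{m_j})$. Since $\XB$ is unconditional, the coordinate projection $S_A=S_{\gamma_A}$ is well defined and bounded by $K_u$ (note $\gamma_A\in\ell_\infty$, so the unconditionality estimate applies even though $A$ is infinite), and $S_A(\xx_{n_j})=\xx_{n_j}$ while $S_A(\xx_{m_j})=0$. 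Hence
\[
\norm{\sum_j a_j \xx_{n_j}}=\norm{S_A\left(\sum_j a_j(\xx_{n_j}-\xx_{m_j})\right)}\le K_u\,\norm{\sum_j a_j(\xx_{n_j}-\xx_{m_j})},
\]
and a final application of subsymmetry, $\norm{\sum_j a_j \xx_j}\le C\norm{\sum_j a_j \xx_{n_j}}$, closes the estimate. Combining the two bounds yields $\Phi\approx\XB$.

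The argument is essentially routine, and I do not expect a serious obstacle. The only points needing a little care are the passage to the quasi-Banach setting, where the triangle inequality must be replaced by the quasi-norm modulus $\kappa$ (this merely inflates the constant), and the legitimacy of the coordinate projection onto the infinite set $A$, which is immediate because unconditionality of $\XB$ makes $S_\gamma$ bounded for every $\gamma\in\ell_\infty$. Each half of the argument invokes only one direction of the subsequence-equivalence together with one structural property of $\XB$, so the proof carries over verbatim to an arbitrary quasi-Banach space.
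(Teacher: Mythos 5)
Your argument is correct and coincides with the standard proof of the result the paper cites without reproducing (\cite{AADK2021}*{Lemma 2.2}): domination of $(\xx_{n_j}-\xx_{m_j})_{j=1}^\infty$ by $\XB$ via the quasi-triangle inequality together with the equivalence of $\XB$ with its subsequences, and the converse domination via the bounded coordinate projection onto $\{n_j \colon j\in\NN\}$, which is legitimate because the interlacing $n_j<m_j<n_{j+1}$ keeps the two index sets disjoint. The only remark worth making is that the uniform equivalence constant $C$ you posit, while a true (standard) consequence of subsymmetry, is not actually needed: the lemma concerns two fixed sequences $(n_j)_{j=1}^\infty$ and $(m_j)_{j=1}^\infty$, so the equivalence constants of the two specific subsequences $(\xx_{n_j})_{j=1}^\infty$ and $(\xx_{m_j})_{j=1}^\infty$ suffice.
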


\begin{theorem}
Let $\Psi$ be a subsymmetric basic sequence in $L_p$, $2\le p<\infty$. Then, $\Psi$ is equivalent the unit vector system of either $\ell_2$ or $\ell_p$.
\end{theorem}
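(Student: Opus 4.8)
The plan is to combine the subsymmetry of $\Psi$ with the dichotomy already established for weakly-null type differences in $L_p$. Since $\Psi=(\bpsi_n)_{n=1}^\infty$ is a subsymmetric basis, it is in particular an unconditional basic sequence, hence semi-normalized and UCC, and it is uniformly separated (being equivalent to all its subsequences, no subsequence can converge, so $\inf_{n\neq m}\norm{\bpsi_n-\bpsi_m}_p>0$ after a harmless normalization). Thus Lemma~\ref{lem:37} applies: there exist $r\in\{2,p\}$ and sequences $(n_j)_{j=1}^\infty$, $(m_j)_{j=1}^\infty$ in $\NN$ with $n_j<m_j<n_{j+1}$ such that the difference sequence $(\bpsi_{n_j}-\bpsi_{m_j})_{j=1}^\infty$ is equivalent to the unit vector system of $\ell_r$.

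Next I would invoke Lemma~\ref{lem:SubSym}, which says precisely that for a subsymmetric basis such a difference sequence $(\bpsi_{n_j}-\bpsi_{m_j})_{j=1}^\infty$ is equivalent to $\Psi$ itself. Chaining the two equivalences gives that $\Psi$ is equivalent to the unit vector system of $\ell_r$ for some $r\in\{2,p\}$, which is exactly the assertion.

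The only point requiring a little care is the hypothesis $\liminf_n\norm{\bpsi_n}_p>0$ needed for Lemma~\ref{lem:KP} (inside Lemma~\ref{lem:37}), together with the boundedness and uniform separation needed to feed Lemma~\ref{lem:37}. Boundedness and the lower bound on norms are immediate from the fact that $\Psi$ is a basic sequence (hence semi-normalized). Uniform separation is the genuinely necessary ingredient, and it follows from subsymmetry: if some subsequence $(\bpsi_{j_k}-\bpsi_{j_k'})$ failed to be bounded below, the corresponding difference basis could not be equivalent to $\Psi$, contradicting Lemma~\ref{lem:SubSym} combined with the requirement that $\Psi$ be equivalent to all its subsequences. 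I expect this verification to be the main (and only mildly nontrivial) obstacle; once it is in place the proof is a two-line composition of Lemma~\ref{lem:37} and Lemma~\ref{lem:SubSym}.

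\begin{proof}
Since $\Psi$ is subsymmetric, it is an unconditional, hence UCC, basic sequence, and in particular it is semi-normalized and bounded in $L_p$ with $\liminf_n\norm{\bpsi_n}_p>0$. Moreover $\Psi$ is uniformly separated: otherwise a difference subsequence would fail to be semi-normalized, contradicting Lemma~\ref{lem:SubSym}. By Lemma~\ref{lem:37} there are $r\in\{2,p\}$ and sequences $(n_j)_{j=1}^\infty$, $(m_j)_{j=1}^\infty$ in $\NN$ with $n_j<m_j<n_{j+1}$ for all $j\in\NN$ such that $(\bpsi_{n_j}-\bpsi_{m_j})_{j=1}^\infty$ is equivalent to the unit vector system of $\ell_r$. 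On the other hand, Lemma~\ref{lem:SubSym} gives that $(\bpsi_{n_j}-\bpsi_{m_j})_{j=1}^\infty$ is equivalent to $\Psi$. Combining both equivalences, $\Psi$ is equivalent to the unit vector system of $\ell_r$.
\end{proof}
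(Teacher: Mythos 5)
Your proof is correct and takes essentially the same route as the paper, whose entire proof is to combine Lemma~\ref{lem:37} with Lemma~\ref{lem:SubSym}. The one point you labor over, uniform separation, admits a shorter verification than your contradiction argument: since the dual functionals of a basic sequence are norm-bounded (by the paper's definition of basis), biorthogonality gives $1=\bpsi_n^*(\bpsi_n-\bpsi_m)\le \norm{\bpsi_n^*}\,\norm{\bpsi_n-\bpsi_m}_p$, so $\inf_{n\ne m}\norm{\bpsi_n-\bpsi_m}_p\ge 1/\sup_k\norm{\bpsi_k^*}>0$ for any basic sequence, subsymmetric or not.
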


\begin{proof}
Just combine Lemma~\ref{lem:SubSym} with Lemma~\ref{lem:37}.
\end{proof}

A Banach space $\XX$ is said to be an $\SL_p$-space, $1\le p\le\infty$, if for every finite-dimensional subspace $\VV\subseteq\XX$ there is a further finite-dimen\-sional subspace $\VV\subseteq\WW\subseteq \XX$ whose Banach-Mazur distance to $\ell_p^{\dim(\WW)}$ is uniformly bounded. It is known that, in the case when $1<p<\infty$, $\XX$ is a separable $\SL_p$-space if and only if it is isomorphic to a non-Hilbertian complemented subspace of $L_p$ (see \cite{LinRos1969}).

Given $1\le p\le\infty$, we denote by $p'$ its conjugate exponent defined by $1/p'=1-1/p$. Given $A\subseteq[1,\infty]$, we set $A'=\{p'\colon p\in A\}$.
\begin{corollary}\label{cor:44}
Let $1<p<\infty$, and let $\XB$ be a bidemocratic basis of an $\SL_p$-space $\XX$. Then, $\ldf[\XB,\XX]\approx \pw_r \approx \udf[\XB,\XX]$, where either $r=p$ or $r=2$.
\end{corollary}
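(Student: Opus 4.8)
The plan is to reduce to Corollary~\ref{cor:38} and to dualize in the range $p<2$. Two structural facts are in place from the outset. First, by Proposition~\ref{prop:21} a bidemocratic basis $\XB$ is super-democratic, and so is $\XB^*$; in particular $\XB$ is democratic and UCC, whence $\ldf[\XB,\XX]\approx\udf[\XB,\XX]$ by the remark following the definition of the democracy functions. It therefore suffices to pin down $\udf[\XB,\XX]$ up to equivalence. Second, since $\XX$ is isomorphic to a complemented subspace of $L_p$, it embeds isomorphically into $L_p$, and dually $\XX^*$ embeds isomorphically into $L_p^*=L_{p'}$.

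Consider first $p\ge 2$. Transporting $\XB$ through an isomorphism of $\XX$ onto a subspace of $L_p$ yields a super-democratic basic sequence in $L_p$ whose fundamental function is equivalent to $\udf[\XB,\XX]$, since both super-democracy and the fundamental function are isomorphic invariants (the quantities $\norm{\Ind_{\varepsilon,A}}$ are only distorted by a bounded factor). Corollary~\ref{cor:38} then forces $\udf[\XB,\XX]\approx\pw_2$ or $\udf[\XB,\XX]\approx\pw_p$, and the first paragraph upgrades this to $\ldf[\XB,\XX]\approx\pw_r\approx\udf[\XB,\XX]$ with $r\in\{2,p\}$.

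For $1<p<2$ I would apply this same step to the dual. Now $p'>2$, and $\XB^*$, transported into $L_{p'}$ through the embedding of $\XX^*$, is a super-democratic basic sequence in $L_{p'}$; Corollary~\ref{cor:38} gives $\udf[\XB^*,\XX^*]\approx\pw_s$ for some $s\in\{2,p'\}$. Since $\XB$ is bidemocratic, \eqref{eq:udfBidem} applies and gives
\[
\udf[\XB,\XX](m)\approx\frac{m}{\udf[\XB^*,\XX^*](m)}\approx m^{1-1/s}=m^{1/s'},
\]
so that $\udf[\XB,\XX]\approx\pw_{s'}$ with $s'\in\{2,(p')'\}=\{2,p\}$; the first paragraph again supplies the matching lower democracy function.

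The arithmetic is immediate once Corollary~\ref{cor:38} is invoked, so the points deserving attention are the structural reductions rather than any fresh estimate. Concretely, one must check that $\XB$ (resp.\ $\XB^*$) transports to a genuine super-democratic \emph{basic sequence} in $L_p$ (resp.\ $L_{p'}$)---which is exactly where Proposition~\ref{prop:21} and the (dual) embedding of the $\SL_p$-space enter---and that \eqref{eq:udfBidem} is legitimately applied when $p<2$. I expect the only real subtlety to lie in this bookkeeping for the case $p<2$, where the conclusion for $\XB$ is extracted from the behavior of $\XB^*$ in $L_{p'}$ through bidemocracy.
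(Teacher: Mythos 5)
Your proposal is correct and follows essentially the same route as the paper: for $2\le p<\infty$ it combines Proposition~\ref{prop:21} with Corollary~\ref{cor:38}, and for $1<p<2$ it dualizes, applying Corollary~\ref{cor:38} to $\XB^*$ viewed inside $L_{p'}$ and then transferring back via \eqref{eq:udfBidem}, exactly as in the paper's proof. The only difference is expository: you make explicit the bookkeeping the paper leaves implicit (that complementation of $\XX$ in $L_p$ is what lets $\XX^*$ embed in $L_{p'}$, and that super-democracy upgrades $\udf\approx\pw_r$ to the two-sided statement $\ldf\approx\pw_r\approx\udf$), which is sound.
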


\begin{proof}
If $2\le p<\infty$ the result follows from combining Proposition~\ref{prop:21} with Corollary~\ref{cor:38}. Assume that $1<p<2$. Then, by Proposition~\ref{prop:21}, $\XB^*$ is a super-democratic basis of a Banach space isomorphic to a subspace of $L_p^*$. Since $L_p^*$ is isometrically isomorphic to $L_{p'}$, there is $s\in\{2,p'\}$ such that $\udf[\XB^*,\XX^*]\approx \pw_s$. Consequently, by Equation~\eqref{eq:udfBidem}, $\udf[\XB, \XX]\approx \pw_r$ for some $r\in \{2,p'\}'=\{2,p\}$.
\end{proof}

Now, we obtain Theorem~\ref{thm:main} as a consequence of the following more general result.
\begin{theorem}
Let $1<p<\infty$, and let $\df$ be the fundamental function of a super-democratic basis of an $\SL_p$-space. Then, $\df\approx \pw_r$, where either $r=p$ or $r=2$.
\end{theorem}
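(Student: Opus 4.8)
The plan is to reduce this final theorem to Corollary~\ref{cor:44}, which already handles the \emph{bidemocratic} case, by promoting the super-democratic hypothesis to bidemocracy. Given a super-democratic basis $\XB$ of an $\SL_p$-space $\XX$, I would first observe that a super-democratic basis is in particular democratic, and that $\XX$, being isomorphic to a subspace of $L_p$ for $1<p<\infty$, has nontrivial type: its type is $r_0=\min\{2,p\}>1$. The obstacle is that Corollary~\ref{cor:23} (and Proposition~\ref{prop:17} behind it) requires the basis to be \emph{truncation quasi-greedy}, whereas here we are only assuming super-democracy, which gives unconditionality for constant coefficients but not obviously the uniform boundedness of the restricted truncation operators $\UU_m$.

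Thus the first real step is to verify that a super-democratic basis is truncation quasi-greedy. This should follow from the definitions recorded in Section~\ref{sect:preliminary}: super-democracy gives both that $\XB$ is UCC and that $\udf\lesssim\ldf$, and conversely a UCC basis with $\udf(m)\lesssim\ldf(m)$ controls $\norm{\UU_m(f)}$ by comparing the truncated greedy sum $\bigl(\min_{n\in A_m(f)}\abs{\xx_n^*(f)}\bigr)\Ind_{\varepsilon(f),A_m(f)}$ against $\norm{f}$ via the lower democracy function and the UCC estimate \eqref{eq:succ}. (This equivalence is standard; I would cite it as folklore or point to the discussion following Proposition~\ref{prop:21}.) With truncation quasi-greediness in hand, $\XB$ is a democratic truncation quasi-greedy basis of a Banach space $\XX$ of type $r_0>1$, so Corollary~\ref{cor:23} applies and shows that $\XB$ is bidemocratic.

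Once bidemocracy is established, the conclusion is immediate: $\XB$ is a bidemocratic basis of an $\SL_p$-space, so Corollary~\ref{cor:44} gives $\ldf[\XB,\XX]\approx\pw_r\approx\udf[\XB,\XX]$ with $r\in\{2,p\}$, and since $\df=\udf[\XB,\XX]$ by definition of the fundamental function, we obtain $\df\approx\pw_r$ as claimed. Finally, to recover Theorem~\ref{thm:main}, I would recall that an almost greedy basis is quasi-greedy and democratic, that quasi-greedy bases are truncation quasi-greedy (hence UCC), and that a UCC democratic basis is super-democratic; thus any almost greedy basis of $L_p$ is in particular a super-democratic basis of the $\SL_p$-space $L_p$, and the general theorem specializes to the desired statement.

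I expect the truncation-quasi-greediness step to be the main point requiring care, since it is the hinge that converts the hypothesis as stated (super-democracy) into the hypothesis demanded by the machinery (a truncation quasi-greedy democratic basis). Everything downstream of that—the invocation of type $r_0>1$ for subspaces of $L_p$, and the final appeal to Corollary~\ref{cor:44} with the duality bookkeeping $\{2,p'\}'=\{2,p\}$—is already packaged in the preceding results. One subtlety worth flagging is the $p<2$ case, where $L_p^*\cong L_{p'}$ and the relevant type is $\min\{2,p\}=p$; but this is exactly the dichotomy that Corollary~\ref{cor:44} already absorbs through \eqref{eq:udfBidem}, so no separate argument is needed here.
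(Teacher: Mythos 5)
Your architecture is the same as the paper's: the paper's entire proof reads ``Just combine Corollary~\ref{cor:23} with Corollary~\ref{cor:44}'', and your closing paragraph recovering Theorem~\ref{thm:main} matches the intended specialization. The genuine gap is the bridging lemma you insert to make Corollary~\ref{cor:23} applicable, namely that a super-democratic basis is truncation quasi-greedy. This is not folklore, and your sketch does not prove it. Super-democracy (equivalently, UCC together with $\udf\lesssim\ldf$) only compares vectors of the form $\Ind_{\varepsilon,A}$ \emph{with one another}; to bound $\norm{\UU_m(f)}=t\,\norm{\Ind_{\varepsilon(f),A}}$, where $A=A_m(f)$ and $t=\min_{n\in A}\abs{\xx_n^*(f)}$, you must bound $\norm{f}$ from below by a constant times $t\,\ldf(m)$ for an \emph{arbitrary} $f$, and neither UCC nor democracy says anything about such $f$: you cannot pass from $f$ to $S_A(f)$ (no unconditionality beyond constant coefficients is available), and the only general mechanism is duality, $t\,m\le\sum_{n\in A}\abs{\xx_n^*(f)}\le\norm{\Ind_{\bar{\varepsilon},A}[\XB^*,\XX^*]}\,\norm{f}$, which requires precisely the product bound $\norm{\Ind_{\varepsilon,A}}\,\norm{\Ind_{\bar{\varepsilon},A}[\XB^*,\XX^*]}\lesssim\abs{A}$, i.e., bidemocracy --- the very conclusion Corollary~\ref{cor:23} is being invoked to produce. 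So your patch is circular. Observe, moreover, that if the implication you assert were citable, then combining Propositions~\ref{prop:17} and~\ref{prop:typeURP} would make \emph{every} super-democratic basis of \emph{every} Banach space of nontrivial type bidemocratic, a substantial theorem rather than a remark; the implications actually on record are quasi-greedy $\Rightarrow$ truncation quasi-greedy (\cite{AABW2021}*{Theorem 4.13}) and bidemocratic $\Rightarrow$ truncation quasi-greedy (Proposition~\ref{prop:21}), neither of which is available under your hypotheses.

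Two points calibrate the damage. First, the gap only bites for $1<p<2$: when $2\le p<\infty$, Proposition~\ref{prop:2<p} and Corollary~\ref{cor:38} yield the dichotomy for every UCC --- in particular every super-democratic --- basic sequence, with no truncation hypothesis; it is the dual passage through \eqref{eq:udfBidem} in Corollary~\ref{cor:44} that needs bidemocracy, and that is exactly the case you waved off with ``no separate argument is needed''. Second, in fairness, the paper's two-line proof makes the same silent promotion, applying Corollary~\ref{cor:23} (stated for democratic truncation quasi-greedy bases) to a basis only assumed super-democratic; the safe reading of the theorem --- and the one sufficient for Theorem~\ref{thm:main} --- carries truncation quasi-greediness as an additional hypothesis, which is harmless in the application because an almost greedy basis is quasi-greedy, hence truncation quasi-greedy, hence (being also democratic) super-democratic. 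So either state and use that stronger hypothesis explicitly, or supply a genuine proof that super-democratic bases of $\SL_p$-spaces with $1<p<2$ are bidemocratic; a folklore citation does not discharge that step.
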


\begin{proof}
Just combine Corollary~\ref{cor:23} with Corollary~\ref{cor:44}.
\end{proof}

We close the paper by exhibiting an application of Theorem~\ref{thm:main}. Let $\YY$ be a \emph{symmetric space}, i.e., a quasi-Banach space $\YY\subseteq\FF^\NN$ for which the unit vector system is a symmetric basis. We say that a quasi-Banach space $\XX$ with a basis $\XB$ embeds in $\YY$ via $\XB$, and we write $\XX \stackrel{\XB}\hookrightarrow \YY$, if $\XB$ dominates unit vector system of $\YY$. In the reverse direction, we say that $\YY$ embeds in $\XX$ via $\XB$, and we write $\YY \stackrel{\XB}\hookrightarrow \XX$, if the unit vector system of $\YY$ dominates $\XB$. Let $\YY_1$ and $\YY_2$ be symmetric spaces whose unit vector systems have equivalent fundamental functions. Squeezing the Banach space $\XX$ as
\[
\YY_1\stackrel{\XB}\hookrightarrow \XX\stackrel{\XB}\hookrightarrow \YY_2
\]
is a condition that guarantees in a certain sense the optimality of the compression algorithms with respect to the basis $\XB$. Besides, such embeddings serve in some situations as a tool to derive other properties of the basis $\XB$. We refer the reader to \cite{AABW2021}*{\S9} for details.

\begin{corollary}
Let $\Psi$ be an almost greedy basis of $L_p$, $1<p<\infty$. Then, there are $r\in\{2,p\}$ and $1<q<s<\infty$ such that
\[
\ell_{r,q} \stackrel{\Psi}\hookrightarrow L_p \stackrel{\Psi}\hookrightarrow \ell_{r,s}.
\]
\end{corollary}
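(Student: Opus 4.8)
The plan is to feed Theorem~\ref{thm:main} into the Lorentz-space embedding theory for quasi-greedy bases of \cite{AABW2021}*{\S9}. Since $\Psi=(\bpsi_n)_{n=1}^\infty$ is almost greedy it is quasi-greedy and democratic, hence truncation quasi-greedy, UCC, and super-democratic; moreover, as $L_p$ has type $\min\{2,p\}>1$, Corollary~\ref{cor:23} shows $\Psi$ is bidemocratic. By Theorem~\ref{thm:main} its fundamental function $\df=\udf[\Psi,L_p]$ satisfies $\df\approx\pw_r$ for some $r\in\{2,p\}$, and $1<r<\infty$. Put $\ww=(n^{1/r-1})_{n=1}^\infty$; then the primitive sequence of $\ww$ is $\approx\pw_r\approx\df$, and $d_{1,u}(\ww)=\ell_{r,u}$ with equivalent quasi-norms for every $0<u\le\infty$. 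So it suffices to produce $1<q<s<\infty$ realizing the two one-sided estimates encoded by $\ell_{r,q}\stackrel{\Psi}\hookrightarrow L_p\stackrel{\Psi}\hookrightarrow\ell_{r,s}$, namely the upper estimate $\norm{f}_p\lesssim\norm{(a_n)_n}_{\ell_{r,q}}$ and the lower estimate $\norm{(a_n)_n}_{\ell_{r,s}}\lesssim\norm{f}_p$ for $f=\sum_n a_n\,\bpsi_n$.

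As a starting point, democracy and truncation quasi-greediness give the two extreme (weak) estimates with the same primary index $r$: the triangle inequality together with super-democracy yields $\norm{f}_p\lesssim\norm{(a_n)_n}_{\ell_{r,1}}$, while the uniform boundedness of the restricted truncation operators $\UU_m$ gives $\norm{(a_n)_n}_{\ell_{r,\infty}}\lesssim\norm{f}_p$ (from $\norm{\UU_m(f)}_p\approx a_m^\ast\,\df(m)\lesssim\norm{f}_p$, where $a_m^\ast$ is the $m$th largest coefficient magnitude). The heart of the matter is to upgrade the secondary indices $1$ and $\infty$ to finite values strictly inside $(1,\infty)$, and this is exactly where the geometry of $L_p$ enters.

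For the upper estimate I would split $f$ into blocks $g_k$ collecting the terms $a_n\,\bpsi_n$ whose coefficients lie in consecutive dyadic layers of magnitude, so that super-democracy gives $\norm{g_k}_p\approx 2^{-k}\df(N_k)\approx 2^{-k}N_k^{1/r}$, where $N_k$ is the number of such terms. The sign-unconditionality carried by quasi-greediness lets one compare $\norm{\sum_k g_k}_p$ with the Rademacher average of $\norm{\sum_k\pm g_k}_p$; the type $t=\min\{2,p\}>1$ of $L_p$ then bounds this average by $(\sum_k\norm{g_k}_p^t)^{1/t}$, which is in turn dominated by $\norm{(a_n)_n}_{\ell_{r,t}}$. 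This yields the upper estimate for every $q\le\min\{2,p\}$. Dually, the cotype $b=\max\{2,p\}<\infty$ produces $(\sum_k\norm{g_k}_p^b)^{1/b}\lesssim\norm{f}_p$; converting this per-block sum into the cumulative Lorentz sum by a Hardy-type inequality---legitimate precisely because $\df\approx\pw_r$ has both the URP and its lower counterpart, i.e. because $1<r<\infty$---gives the lower estimate for every $s\ge\max\{2,p\}$.

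To finish, choose any $q\in(1,\min\{2,p\})$ and $s\in(\max\{2,p\},\infty)$; both intervals are nonempty because $1<p<\infty$, and $q<\min\{2,p\}\le\max\{2,p\}<s$, so $1<q<s<\infty$ and both embeddings hold. I expect the main obstacle to be the upgrade step of the previous paragraph: extracting finite secondary indices from the extreme $\ell_{r,1}$ and $\ell_{r,\infty}$ estimates, the delicate point being the passage between per-block and cumulative Lorentz sums, which is what forces one to invoke the regularity ($1<r<\infty$) of the fundamental function. Since all the required inputs hold here---$\Psi$ is quasi-greedy and democratic, $L_p$ has nontrivial type and finite cotype, and $\df$ has the URP and LRP---one may instead simply quote the corresponding quasi-greedy embedding theorems of \cite{AABW2021}*{\S9} and avoid reproving the type/cotype layer estimates.
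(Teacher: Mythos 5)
Your reduction is correct up to the point you yourself flag as ``the main obstacle,'' but that obstacle is genuine and neither of your two proposed resolutions closes it. To invoke type you must pass from $\norm{f}_p=\norm{\sum_k g_k}_p$ to the Rademacher average $\Ave_{\varepsilon_k=\pm1}\norm{\sum_k \varepsilon_k g_k}_p$, and to invoke cotype you need the reverse comparison; both require the sign-flips of the dyadic coefficient layers to be uniformly comparable to $\norm{f}_p$. Quasi-greediness does not supply this: it controls greedy partial sums $\sum_{k\le K} g_k$ and, via the truncation operators, vectors with coefficients of (nearly) constant modulus, but uniform boundedness of sign-flips constant on the magnitude layers is essentially a form of unconditionality (averaging such flips recovers coordinate projections on unions of layers, and within-layer flips are cheap for a democratic truncation quasi-greedy basis), while almost greedy bases of $L_p$ may be conditional. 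With only the triangle inequality you recover $q=1$, and with only the truncation operators you recover $s=\infty$ --- exactly the extreme indices you were trying to improve, so the type/cotype layer computation as sketched does not get off the ground.

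Your fallback citation is also insufficient: the squeezing theorems for almost greedy bases in \cite{AABW2021}*{\S9} yield $d_{1,1}(\ww)\hookrightarrow \XX \hookrightarrow d_{1,\infty}(\ww)$, i.e.\ secondary indices $1$ and $\infty$, not $1<q<s<\infty$. The strict-interior indices are the entire content of the corollary, and the paper obtains them by combining Theorem~\ref{thm:main} with \cite{ABW2021}*{Theorem 1.1}, which proves precisely this upgrade for almost greedy bases of \emph{superreflexive} Banach spaces; its proof goes through equivalent norms with power-type moduli of smoothness and convexity (Pisier's renorming) and a careful analysis of greedy blocks, not plain Rademacher type and cotype. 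Your preliminary steps do match the paper's (almost greedy $\Rightarrow$ quasi-greedy and democratic; Theorem~\ref{thm:main} gives $\df\approx\pw_r$ with $r\in\{2,p\}$; $d_{1,u}(\ww)=\ell_{r,u}$ for $\ww=(n^{1/r-1})_{n=1}^\infty$; the bidemocracy detour via Corollary~\ref{cor:23} is unnecessary), but the core analytic content --- the finite-index embeddings --- is missing and should be discharged by quoting \cite{ABW2021}*{Theorem 1.1}, using that $L_p$ is superreflexive for $1<p<\infty$, or by a genuinely new argument replacing the unjustified averaging step.
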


\begin{proof}
Combine Theorem~\ref{thm:main} with \cite{ABW2021}*{Theorem 1.1}.
\end{proof}


\begin{bibdiv}
\begin{biblist}

\bib{AlbiacAnsorena2016}{article}{
      author={Albiac, Fernando},
      author={Ansorena, Jos\'{e}~L.},
       title={Lorentz spaces and embeddings induced by almost greedy bases in
  {B}anach spaces},
        date={2016},
        ISSN={0176-4276},
     journal={Constr. Approx.},
      volume={43},
      number={2},
       pages={197\ndash 215},
         url={https://doi-org/10.1007/s00365-015-9293-3},
      review={\MR{3472645}},
}

\bib{AABW2021}{article}{
      author={Albiac, Fernando},
      author={Ansorena, Jos\'{e}~L.},
      author={Bern\'{a}, Pablo~M.},
      author={Wojtaszczyk, Przemys{\l}aw},
       title={Greedy approximation for biorthogonal systems in quasi-{B}anach
  spaces},
        date={2021},
     journal={Dissertationes Math. (Rozprawy Mat.)},
      volume={560},
       pages={1\ndash 88},
}

\bib{AACV2019}{article}{
      author={Albiac, Fernando},
      author={Ansorena, Jos\'{e}~L.},
      author={Ciaurri, \'{O}scar},
      author={Varona, Juan~L.},
       title={Unconditional and quasi-greedy bases in {$L_p$} with applications
  to {J}acobi polynomials {F}ourier series},
        date={2019},
        ISSN={0213-2230},
     journal={Rev. Mat. Iberoam.},
      volume={35},
      number={2},
       pages={561\ndash 574},
         url={https://doi.org/10.4171/rmi/1062},
      review={\MR{3945734}},
}

\bib{AADK2016}{article}{
      author={Albiac, Fernando},
      author={Ansorena, Jos\'{e}~L.},
      author={Dilworth, Stephen~J.},
      author={Kutzarova, Denka},
       title={Banach spaces with a unique greedy basis},
        date={2016},
        ISSN={0021-9045},
     journal={J. Approx. Theory},
      volume={210},
       pages={80\ndash 102},
         url={http://dx.doi.org/10.1016/j.jat.2016.06.005},
      review={\MR{3532713}},
}

\bib{AADK2019b}{article}{
      author={Albiac, Fernando},
      author={Ansorena, Jos\'{e}~L.},
      author={Dilworth, Stephen~J.},
      author={Kutzarova, Denka},
       title={Building highly conditional almost greedy and quasi-greedy bases
  in {B}anach spaces},
        date={2019},
        ISSN={0022-1236},
     journal={J. Funct. Anal.},
      volume={276},
      number={6},
       pages={1893\ndash 1924},
         url={https://doi-org/10.1016/j.jfa.2018.08.015},
      review={\MR{3912795}},
}

\bib{AADK2021}{article}{
      author={Albiac, Fernando},
      author={Ansorena, Jos\'{e}~L.},
      author={Dilworth, Stephen~J.},
      author={Kutzarova, Denka},
       title={A dichotomy for subsymmetric basic sequences with applications to
  {G}arling spaces},
        date={2021},
        ISSN={0002-9947},
     journal={Trans. Amer. Math. Soc.},
      volume={374},
      number={3},
       pages={2079\ndash 2106},
         url={https://doi.org/10.1090/tran/8278},
      review={\MR{4216733}},
}

\bib{AAW2021b}{article}{
      author={Albiac, Fernando},
      author={Ansorena, Jos\'{e}~L.},
      author={Wojtaszczyk, Przemys{\l}aw},
       title={On certain subspaces of {$\ell_p$} for {$0<p\leq1$} and their
  applications to conditional quasi-greedy bases in {$p$}-{B}anach spaces},
        date={2021},
        ISSN={0025-5831},
     journal={Math. Ann.},
      volume={379},
      number={1-2},
       pages={465\ndash 502},
         url={https://doi-org/10.1007/s00208-020-02069-3},
      review={\MR{4211094}},
}

\bib{AAW2021}{article}{
      author={Albiac, Fernando},
      author={Ansorena, Jos\'{e}~L.},
      author={Wojtaszczyk, Przemys{\l}aw},
       title={Quasi-greedy bases in {$\ell_ p$} {$(0<p<1)$} are democratic},
        date={2021},
        ISSN={0022-1236},
     journal={J. Funct. Anal.},
      volume={280},
      number={7},
       pages={108871, 21},
         url={https://doi-org/10.1016/j.jfa.2020.108871},
      review={\MR{4211033}},
}

\bib{AlbiacKalton2016}{book}{
      author={Albiac, Fernando},
      author={Kalton, Nigel~J.},
       title={Topics in {B}anach space theory},
     edition={Second Edition},
      series={Graduate Texts in Mathematics},
   publisher={Springer, [Cham]},
        date={2016},
      volume={233},
        ISBN={978-3-319-31555-3; 978-3-319-31557-7},
         url={https://doi.org/10.1007/978-3-319-31557-7},
        note={With a foreword by Gilles Godefroy},
      review={\MR{3526021}},
}

\bib{ABW2021}{article}{
      author={Ansorena, Jos\'{e}~L.},
      author={Bello, Glenier},
      author={Wojtaszczyk, Przemys{\l}aw},
       title={Lorentz spaces and embeddings induced by almost greedy bases in
  superreflexive {B}anach spaces},
        date={2021},
     journal={arXiv e-prints},
      eprint={2105.09203},
        note={Accepted for publication in Israel Journal of Mathematics},
}

\bib{BCLT1985}{article}{
      author={Bourgain, Jean},
      author={Casazza, Peter~G.},
      author={Lindenstrauss, Joram},
      author={Tzafriri, Lior},
       title={Banach spaces with a unique unconditional basis, up to
  permutation},
        date={1985},
        ISSN={0065-9266},
     journal={Mem. Amer. Math. Soc.},
      volume={54},
      number={322},
       pages={iv+111},
         url={https://doi-org/10.1090/memo/0322},
      review={\MR{782647}},
}

\bib{CasKal1998}{article}{
      author={Casazza, Peter~G.},
      author={Kalton, Nigel~J.},
       title={Uniqueness of unconditional bases in {B}anach spaces},
        date={1998},
        ISSN={0021-2172},
     journal={Israel J. Math.},
      volume={103},
       pages={141\ndash 175},
         url={https://doi-org/10.1007/BF02762272},
      review={\MR{1613564}},
}

\bib{DHK2006}{article}{
      author={Dilworth, Stephen~J.},
      author={Hoffmann, Mark},
      author={Kutzarova, Denka},
       title={Non-equivalent greedy and almost greedy bases in {$l_p$}},
        date={2006},
        ISSN={0972-6802},
     journal={J. Funct. Spaces Appl.},
      volume={4},
      number={1},
       pages={25\ndash 42},
         url={https://doi-org/10.1155/2006/368648},
      review={\MR{2194634}},
}

\bib{DKK2003}{article}{
      author={Dilworth, Stephen~J.},
      author={Kalton, Nigel~J.},
      author={Kutzarova, Denka},
       title={On the existence of almost greedy bases in {B}anach spaces},
        date={2003},
        ISSN={0039-3223},
     journal={Studia Math.},
      volume={159},
      number={1},
       pages={67\ndash 101},
         url={https://doi.org/10.4064/sm159-1-4},
        note={Dedicated to Professor Aleksander Pe{\l}czy\'nski on the occasion
  of his 70th birthday},
      review={\MR{2030904}},
}

\bib{DKKT2003}{article}{
      author={Dilworth, Stephen~J.},
      author={Kalton, Nigel~J.},
      author={Kutzarova, Denka},
      author={Temlyakov, Vladimir~N.},
       title={The thresholding greedy algorithm, greedy bases, and duality},
        date={2003},
        ISSN={0176-4276},
     journal={Constr. Approx.},
      volume={19},
      number={4},
       pages={575\ndash 597},
         url={https://doi-org/10.1007/s00365-002-0525-y},
      review={\MR{1998906}},
}

\bib{DSBT2012}{article}{
      author={Dilworth, Stephen~J.},
      author={Soto-Bajo, Mois\'es},
      author={Temlyakov, Vladimir~N.},
       title={Quasi-greedy bases and {L}ebesgue-type inequalities},
        date={2012},
        ISSN={0039-3223},
     journal={Studia Math.},
      volume={211},
      number={1},
       pages={41\ndash 69},
         url={https://doi-org/10.4064/sm211-1-3},
      review={\MR{2990558}},
}

\bib{James1950}{article}{
      author={James, Robert~C.},
       title={Bases and reflexivity of {B}anach spaces},
        date={1950},
        ISSN={0003-486X},
     journal={Ann. of Math. (2)},
      volume={52},
       pages={518\ndash 527},
         url={https://doi-org/10.2307/1969430},
      review={\MR{39915}},
}

\bib{KadPel1962}{article}{
      author={Kadets, Mikhail~I.},
      author={Pe{\l}czy{\'n}ski, Aleksander},
       title={Bases, lacunary sequences and complemented subspaces in the
  spaces {$L_{p}$}},
        date={1961/1962},
        ISSN={0039-3223},
     journal={Studia Math.},
      volume={21},
       pages={161\ndash 176},
      review={\MR{0152879}},
}

\bib{Kalton1977b}{article}{
      author={Kalton, Nigel~J.},
       title={Orlicz sequence spaces without local convexity},
        date={1977},
        ISSN={0305-0041},
     journal={Math. Proc. Cambridge Philos. Soc.},
      volume={81},
      number={2},
       pages={253\ndash 277},
         url={https://doi-org/10.1017/S0305004100053342},
      review={\MR{433194}},
}

\bib{Kalton1980}{article}{
      author={Kalton, Nigel~J.},
       title={Convexity, type and the three space problem},
        date={1980/81},
        ISSN={0039-3223},
     journal={Studia Math.},
      volume={69},
      number={3},
       pages={247\ndash 287},
         url={https://doi-org/10.4064/sm-69-3-247-287},
      review={\MR{647141}},
}

\bib{KLW1990}{article}{
      author={Kalton, Nigel~J.},
      author={Ler\'{a}noz, Camino},
      author={Wojtaszczyk, Przemys{\l}aw},
       title={Uniqueness of unconditional bases in quasi-{B}anach spaces with
  applications to {H}ardy spaces},
        date={1990},
        ISSN={0021-2172},
     journal={Israel J. Math.},
      volume={72},
      number={3},
       pages={299\ndash 311 (1991)},
         url={https://doi.org/10.1007/BF02773786},
      review={\MR{1120223}},
}

\bib{KoTe1999}{article}{
      author={Konyagin, Sergei~V.},
      author={Temlyakov, Vladimir~N.},
       title={A remark on greedy approximation in {B}anach spaces},
        date={1999},
        ISSN={1310-6236},
     journal={East J. Approx.},
      volume={5},
      number={3},
       pages={365\ndash 379},
      review={\MR{1716087}},
}

\bib{KotheToeplitz1934}{article}{
      author={K{\"o}the, Gottfried},
      author={Toeplitz, Otto},
       title={Lineare {R}{\"a}ume mit unendlich vielen {K}oordinaten und
  {R}inge unendlicher {M}atrizen},
        date={1934},
        ISSN={0075-4102},
     journal={J. Reine Angew. Math.},
      volume={171},
       pages={193\ndash 226},
         url={https://doi-org/10.1515/crll.1934.171.193},
      review={\MR{1581429}},
}

\bib{LinPel1968}{article}{
      author={Lindenstrauss, Joram},
      author={Pe{\l}czy\'{n}ski, Aleksander},
       title={Absolutely summing operators in {$L_{p}$}-spaces and their
  applications},
        date={1968},
        ISSN={0039-3223},
     journal={Studia Math.},
      volume={29},
       pages={275\ndash 326},
         url={https://doi-org/10.4064/sm-29-3-275-326},
      review={\MR{0231188}},
}

\bib{LinRos1969}{article}{
      author={Lindenstrauss, Joram},
      author={Rosenthal, Haskell~P.},
       title={The {$\mathcal{L}_{p}$} spaces},
        date={1969},
        ISSN={0021-2172},
     journal={Israel J. Math.},
      volume={7},
       pages={325\ndash 349},
         url={https://doi-org/10.1007/BF02788865},
      review={\MR{0270119}},
}

\bib{LinZip1969}{article}{
      author={Lindenstrauss, Joram},
      author={Zippin, Mordecay},
       title={Banach spaces with a unique unconditional basis},
        date={1969},
     journal={J. Functional Analysis},
      volume={3},
       pages={115\ndash 125},
         url={https://doi-org/10.1016/0022-1236(69)90054-8},
      review={\MR{0236668}},
}

\bib{Nielsen2007}{article}{
      author={Nielsen, Morten},
       title={An example of an almost greedy uniformly bounded orthonormal
  basis for {$L_p(0,1)$}},
        date={2007},
        ISSN={0021-9045},
     journal={J. Approx. Theory},
      volume={149},
      number={2},
       pages={188\ndash 192},
         url={https://doi-org/10.1016/j.jat.2007.04.011},
      review={\MR{2374604}},
}

\bib{Rosenthal1972}{article}{
      author={Rosenthal, Haskell~P.},
       title={On subspaces of {$L^{p}$}},
        date={1973},
        ISSN={0003-486X},
     journal={Ann. of Math. (2)},
      volume={97},
       pages={344\ndash 373},
         url={https://doi-org/10.2307/1970850},
      review={\MR{312222}},
}

\bib{Smela2007}{article}{
      author={Smela, Krzysztof},
       title={Subsequences of the {H}aar basis consisting of full levels in
  {$H_p$} for {$0<p<\infty$}},
        date={2007},
        ISSN={0002-9939},
     journal={Proc. Amer. Math. Soc.},
      volume={135},
      number={6},
       pages={1709\ndash 1716},
         url={https://doi-org/10.1090/S0002-9939-06-08616-3},
      review={\MR{2286080}},
}

\bib{Woj1997}{article}{
      author={Wojtaszczyk, Przemys{\l}aw},
       title={Uniqueness of unconditional bases in quasi-{B}anach spaces with
  applications to {H}ardy spaces. {II}},
        date={1997},
        ISSN={0021-2172},
     journal={Israel J. Math.},
      volume={97},
       pages={253\ndash 280},
         url={https://doi-org/10.1007/BF02774040},
      review={\MR{1441252}},
}

\bib{Woj2000}{article}{
      author={Wojtaszczyk, Przemys{\l}aw},
       title={Greedy algorithm for general biorthogonal systems},
        date={2000},
        ISSN={0021-9045},
     journal={J. Approx. Theory},
      volume={107},
      number={2},
       pages={293\ndash 314},
         url={https://doi-org/10.1006/jath.2000.3512},
      review={\MR{1806955}},
}

\end{biblist}
\end{bibdiv}


\end{document}